\newtheorem{theorem}{Theorem}[section]
\newtheorem{lemma}[theorem]{Lemma}
\newtheorem{corollary}[theorem]{Corollary}
\theoremstyle{definition}
\newtheorem{definition}[theorem]{Definition}
\newtheorem{example}[theorem]{Example}
\theoremstyle{remark}
\newtheorem{remark}[theorem]{Remark}
\numberwithin{equation}{section}
\begin{document}

\setcounter{page}{1}

\title[$\ast$-operator frame for $End_{\mathcal{A}}^{\ast}(\mathcal{H})$]{$\ast$-operator frame for $End_{\mathcal{A}}^{\ast}(\mathcal{H})$}

\author[S. KABBAJ, M. ROSSAFI]{S. KABBAJ$^1$ \MakeLowercase{and} M. ROSSAFI$^2$$^{*}$}

\address{$^{1}$Department of Mathematics, University of Ibn Tofail, B.P. 133, Kenitra, Morocco}
\email{\textcolor[rgb]{0.00,0.00,0.84}{samkabbaj@yahoo.fr}}

\address{$^{2}$Department of Mathematics, University of Ibn Tofail, B.P. 133, Kenitra, Morocco}
\email{\textcolor[rgb]{0.00,0.00,0.84}{rossafimohamed@gmail.com}}

\subjclass[2010]{42C15, 46L06}

\keywords{$\ast$-frame, operator frame, $\ast$-operator frame, $C^{\ast}$-algebra, Hilbert $\mathcal{A}$-modules.}

\date{12/12/2017; 
\newline \indent $^{*}$Corresponding author}

\begin{abstract}
In this paper, a new notion of frames is introduced: $\ast$-operator frame as generalization of $\ast$-frames in Hilbert $C^{\ast}$-modules introduced by A. Alijani and M. A. Dehghan \cite{Ali} and we establish some results.
\end{abstract} \maketitle

\section{Introduction}
Frames were first introduced in 1952 by Duffin and Schaeffer \cite{Duf} in the study of nonharmonic fourier series. Frames possess many nice properties which make them very useful in wavelet analysis, irregular sampling theory, signal processing and many other fields. The frame theory was quickly generalized.
There were several kinds of generalization. For example: g-frames, $\ast$-frames, $\ast$-g-frames and $\ast$-$K$-g-frames in Hilbert $A$-modules \cite{Ros2}.

In this article, a new notion of frames is introduced: $\ast$-operator frame as generalization of $\ast$-frames in Hilbert $C^{\ast}$-modules introduced by A. Alijani and M. A. Dehghan \cite{Ali} and we establish some results.

The paper is organized as follows:

 in section 2, we briefly recall the definitions and basic properties of $C^{\ast}$-algebra, Hilbert $C^{\ast}$-modules, frames and $\ast$-frames in Hilbert $C^{\ast}$-modules; 
 
 in section 3, we introduce the $\ast$-operator frame, the $\ast$-operator frame transform and the $\ast$-frame operator;
 
 in section 4, we investigate tensor product of Hilbert $C^{\ast}$-modules, we show that tensor product of $\ast$-operator frames for Hilbert $C^{\ast}$-modules $\mathcal{H}$ and $\mathcal{K}$, present an $\ast$-operator frames for $\mathcal{H}\otimes\mathcal{K}$, and tensor product of their $\ast$-frame operators is the $\ast$-frame operator of their tensor product of $\ast$-operator frames.
\section{Preliminaries}
Let $I$ be a countable index set. In this section we briefly recall the definitions and basic properties of $C^{\ast}$-algebra, Hilbert $C^{\ast}$-modules, frame, $\ast$-frame in Hilbert $C^{\ast}$-modules. For information about frames in Hilbert spaces we refer to \cite{Ch}. Our reference for $C^{\ast}$-algebras is \cite{Dav,Con}. For a $C^{\ast}$-algebra $\mathcal{A}$, an element $a\in\mathcal{A}$ is positive ($a\geq 0$) if $a=a^{\ast}$ and $sp(a)\subset\mathbf{R^{+}}$. $\mathcal{A}^{+}$ denotes the set of positive elements of $\mathcal{A}$.
\begin{definition}
	\cite{BA}. Let $ \mathcal{A} $ be a unital $C^{\ast}$-algebra and $\mathcal{H}$ be a left $ \mathcal{A} $-module, such that the linear structures of $\mathcal{A}$ and $ \mathcal{H} $ are compatible. $\mathcal{H}$ is a pre-Hilbert $\mathcal{A}$-module if $\mathcal{H}$ is equipped with an $\mathcal{A}$-valued inner product $\langle.,.\rangle_{\mathcal{A}} :\mathcal{H}\times\mathcal{H}\rightarrow\mathcal{A}$, such that is sesquilinear, positive definite and respects the module action. In the other words,
	\begin{itemize}
		\item [(i)] $ \langle x,x\rangle_{\mathcal{A}}\geq0 $ for all $ x\in\mathcal{H} $ and $ \langle x,x\rangle_{\mathcal{A}}=0$ if and only if $x=0$.
		\item [(ii)] $\langle ax+y,z\rangle_{\mathcal{A}}=a\langle x,y\rangle_{\mathcal{A}}+\langle y,z\rangle_{\mathcal{A}}$ for all $a\in\mathcal{A}$ and $x,y,z\in\mathcal{H}$.
		\item[(iii)] $ \langle x,y\rangle_{\mathcal{A}}=\langle y,x\rangle_{\mathcal{A}}^{\ast} $ for all $x,y\in\mathcal{H}$.
	\end{itemize}	 
	For $x\in\mathcal{H}, $ we define $||x||=||\langle x,x\rangle_{\mathcal{A}}||^{\frac{1}{2}}$. If $\mathcal{H}$ is complete with $||.||$, it is called a Hilbert $\mathcal{A}$-module or a Hilbert $C^{\ast}$-module over $\mathcal{A}$. For every $a$ in $C^{\ast}$-algebra $\mathcal{A}$, we have $|a|=(a^{\ast}a)^{\frac{1}{2}}$ and the $\mathcal{A}$-valued norm on $\mathcal{H}$ is defined by $|x|=\langle x, x\rangle_{\mathcal{A}}^{\frac{1}{2}}$ for $x\in\mathcal{H}$.
	\begin{example} \cite{Tro}
		If $ \{\mathcal{H}_{k}\}_{k\in\mathbf{N}} $ is a countable set of Hilbert $\mathcal{A}$-modules, then one can define their direct sum $ \oplus_{k\in\mathbb{N}}\mathcal{H}_{k} $. On the $\mathcal{A}$-module $ \oplus_{k\in\mathbb{N}}\mathcal{H}_{k} $ of all sequences $x=(x_{k})_{k\in\mathbb{N}}: x_{k}\in\mathcal{H}_{k}$, such that the series $ \sum_{k\in\mathbb{N}}\langle x_{k}, x_{k}\rangle_{\mathcal{A}} $ is norm-convergent in the $\mathcal{C}^{\ast}$-algebra $\mathcal{A}$, we define the inner product by
		\begin{equation*}
		\langle x, y\rangle:=\sum_{k\in\mathbb{N}}\langle x_{k}, y_{k}\rangle_{\mathcal{A}} 
		\end{equation*}
		for $x, y\in\oplus_{k\in\mathbb{N}}\mathcal{H}_{k} $.
		
		Then $\oplus_{k\in\mathbb{N}}\mathcal{H}_{k}$ is a Hilbert $\mathcal{A}$-module.
		
		The direct sum of a countable number of copies of a Hilbert $\mathcal{C}^{\ast}$-module $\mathcal{H}$ is denoted by $l^{2}(\mathcal{H})$.
	\end{example}
\end{definition}
	Let $\mathcal{H}$ and $\mathcal{K}$ be two Hilbert $\mathcal{A}$-modules. A map $T:\mathcal{H}\rightarrow\mathcal{K}$ is said to be adjointable if there exists a map $T^{\ast}:\mathcal{K}\rightarrow\mathcal{H}$ such that $\langle Tx,y\rangle_{\mathcal{A}}=\langle x,T^{\ast}y\rangle_{\mathcal{A}}$ for all $x\in\mathcal{H}$ and $y\in\mathcal{K}$.
	 
	We also reserve the notation $End_{\mathcal{A}}^{\ast}(\mathcal{H},\mathcal{K})$ for the set of all adjointable operators from $\mathcal{H}$ to $\mathcal{K}$ and $End_{\mathcal{A}}^{\ast}(\mathcal{H},\mathcal{H})$ is abbreviated to $End_{\mathcal{A}}^{\ast}(\mathcal{H})$.

\begin{definition} 
	\cite{F4}. Let $ \mathcal{H} $ be a Hilbert $\mathcal{A}$-module. A family $\{x_{i}\}_{i\in I}$ of elements of $\mathcal{H}$ is a frame for $ \mathcal{H} $, if there exist two positive constants $A$ and $B$ such that for all $x\in\mathcal{H}$,
	\begin{equation}\label{1}
		A\langle x,x\rangle_{\mathcal{A}}\leq\sum_{i\in I}\langle x,x_{i}\rangle_{\mathcal{A}}\langle x_{i},x\rangle_{\mathcal{A}}\leq B\langle x,x\rangle_{\mathcal{A}}.
	\end{equation}
	The numbers $A$ and $B$ are called lower and upper bound of the frame, respectively. If $A=B=\lambda$, the frame is $\lambda$-tight. If $A = B = 1$, it is called a normalized tight frame or a Parseval frame. If the sum in the middle of \eqref{1} is convergent in norm, the frame is called standard.
\end{definition}	
\begin{definition}
	\cite{Ali}. Let $ \mathcal{H} $ be a Hilbert $\mathcal{A}$-module. A family $\{x_{i}\}_{i\in I}$ of elements of $\mathcal{H}$ is an $\ast$-frame for $ \mathcal{H} $, if there	exist strictly nonzero elements $A$ , $B$ in $\mathcal{A}$, such that for all $x\in\mathcal{H}$,
	\begin{equation}\label{3}
		A\langle x,x\rangle_{\mathcal{A}} A^{\ast}\leq\sum_{i\in I}\langle x,x_{i}\rangle_{\mathcal{A}}\langle x_{i},x\rangle_{\mathcal{A}}\leq B\langle x,x\rangle_{\mathcal{A}} B^{\ast}.
	\end{equation}
	The elements $A$ and $B$ are called lower and upper bound of the $\ast$-frame, respectively. If $A=B=\lambda$, the $\ast$-frame is $\lambda$-tight. If $A = B = 1$, it is called a normalized tight $\ast$-frame or a Parseval $\ast$-frame. If the sum in the middle of \eqref{3} is convergent in norm, the $\ast$-frame is called standard.
\end{definition}
\begin{lemma}\label{2.7}
	\cite{Ali}. If $\varphi:\mathcal{A}\longrightarrow\mathcal{B}$ is an $\ast$-homomorphism between $\mathcal{C}^{\ast}$-algebras, then $\varphi$ is increasing, that is, if $a\leq b$, then $\varphi(a)\leq\varphi(b)$.
\end{lemma}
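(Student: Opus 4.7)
The plan is to reduce the statement to preservation of positivity and then exploit the algebraic characterization of positive elements in a $C^{\ast}$-algebra. Recall that by definition the relation $a\leq b$ in $\mathcal{A}$ means precisely that $b-a$ belongs to $\mathcal{A}^{+}$, and $\varphi$ is linear, so I would begin by observing
\[
\varphi(b)-\varphi(a)=\varphi(b-a),
\]
which reduces the task to showing that $\varphi$ sends positive elements to positive elements.

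Next I would invoke the standard characterization: an element $c\in\mathcal{A}$ is positive if and only if there exists $d\in\mathcal{A}$ with $c=d^{\ast}d$ (equivalently, a self-adjoint element with nonnegative spectrum admits a square root, which itself is self-adjoint, so $c=(c^{1/2})^{\ast}c^{1/2}$). Applying this to $c:=b-a\geq 0$, write $b-a=d^{\ast}d$ for some $d\in\mathcal{A}$.

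Using that $\varphi$ is a $\ast$-homomorphism (multiplicative and $\ast$-preserving), I would then compute
\[
\varphi(b-a)=\varphi(d^{\ast}d)=\varphi(d^{\ast})\varphi(d)=\varphi(d)^{\ast}\varphi(d),
\]
which is manifestly of the form $e^{\ast}e$ in $\mathcal{B}$ and hence lies in $\mathcal{B}^{+}$. Combining with the first step yields $\varphi(b)-\varphi(a)\geq 0$, i.e.\ $\varphi(a)\leq\varphi(b)$, as desired.

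The proof is essentially routine; the only conceptual ingredient that must be quoted from $C^{\ast}$-theory is the equivalence between the spectral definition of positivity (self-adjoint with spectrum in $\mathbb{R}^{+}$) used in the Preliminaries and the algebraic form $d^{\ast}d$. I expect this to be the only point worth flagging: without that equivalence one would have to argue through the continuous functional calculus applied to $b-a$ directly, noting that $\varphi$ intertwines the functional calculus for self-adjoint elements, and then conclude that $\varphi(b-a)$ has spectrum contained in $\mathbb{R}^{+}$. Either route is short, but the $d^{\ast}d$ factorization is the cleanest and best suited to this paper's style.
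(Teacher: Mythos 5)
The paper states this lemma as a citation to \cite{Ali} and gives no proof of its own, so there is nothing to compare against; your argument is the standard one and is correct: reduce to positivity via linearity, use the characterization $c\geq 0 \iff c=d^{\ast}d$, and observe that $\varphi(d^{\ast}d)=\varphi(d)^{\ast}\varphi(d)\geq 0$. The one ingredient you rightly flag, the equivalence of the spectral and $d^{\ast}d$ definitions of positivity, is exactly what must be imported from $C^{\ast}$-theory, and your handling of it is fine.
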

\begin{lemma}\label{2.8}
	\cite{Ali}. Let $\mathcal{H}$ and $\mathcal{K}$ be two Hilbert $\mathcal{A}$-modules and $T\in End_{\mathcal{A}}^{\ast}(\mathcal{H},\mathcal{K})$.
	\begin{itemize}
		\item [(i)] If $T$ is injective and $T$ has closed range, then the adjointable map $T^{\ast}T$ is invertible and $$\|(T^{\ast}T)^{-1}\|^{-1}I_{\mathcal{H}}\leq T^{\ast}T\leq\|T\|^{2}I_{\mathcal{H}}.$$
		\item  [(ii)]	If $T$ is surjective, then the adjointable map $TT^{\ast}$ is invertible and $$\|(TT^{\ast})^{-1}\|^{-1}I_{\mathcal{K}}\leq TT^{\ast}\leq\|T\|^{2}I_{\mathcal{K}}.$$
\end{itemize}	
\end{lemma}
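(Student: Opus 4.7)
The plan is to view $T^{\ast}T$ (respectively $TT^{\ast}$) as a positive element of the unital $C^{\ast}$-algebra $End_{\mathcal{A}}^{\ast}(\mathcal{H})$ (resp.\ $End_{\mathcal{A}}^{\ast}(\mathcal{K})$) and extract the two-sided bound from general $C^{\ast}$-algebraic spectral considerations, after first securing invertibility.

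\textbf{Step 1 (invertibility).} For part (i), I would first argue that $T^{\ast}T$ is invertible in $End_{\mathcal{A}}^{\ast}(\mathcal{H})$. Since $T$ has closed range, $\mathcal{K}$ splits orthogonally as $\operatorname{range}(T)\oplus\ker(T^{\ast})$, and $T^{\ast}$ vanishes on $\ker(T^{\ast})$; combined with injectivity of $T$, a direct check gives $\ker(T^{\ast}T)=\ker T=\{0\}$ and $\operatorname{range}(T^{\ast}T)=\mathcal{H}$, i.e.\ $T^{\ast}T$ is a positive adjointable bijection and hence invertible with adjointable inverse. For part (ii), surjectivity of $T$ makes $T^{\ast}$ an injective adjointable operator with closed range (orthogonality of $\operatorname{range}(T)$ and $\ker(T^{\ast})$ forces $\ker(T^{\ast})=\{0\}$, and $T^{\ast}$ inherits closed range), so I would simply apply part (i) to $T^{\ast}$, obtaining invertibility of $(T^{\ast})^{\ast}T^{\ast}=TT^{\ast}$.

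\textbf{Step 2 (upper bound).} Because $T^{\ast}T\geq 0$ in the $C^{\ast}$-algebra $End_{\mathcal{A}}^{\ast}(\mathcal{H})$, spectral theory in the commutative unital $C^{\ast}$-subalgebra generated by $T^{\ast}T$ and $I_{\mathcal{H}}$ gives $T^{\ast}T\leq\|T^{\ast}T\|\,I_{\mathcal{H}}$. Invoking the $C^{\ast}$-identity $\|T^{\ast}T\|=\|T\|^{2}$ yields $T^{\ast}T\leq\|T\|^{2}I_{\mathcal{H}}$. The same argument applied to $TT^{\ast}$ handles part (ii).

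\textbf{Step 3 (lower bound).} Having invertibility, $(T^{\ast}T)^{-1}$ is itself positive and satisfies $(T^{\ast}T)^{-1}\leq\|(T^{\ast}T)^{-1}\|\,I_{\mathcal{H}}$. Since $T^{\ast}T$ and $(T^{\ast}T)^{-1}$ commute and are positive, multiplication by the positive operator $T^{\ast}T$ preserves the inequality (this is where I would invoke Lemma~\ref{2.7} via the continuous functional calculus on the commutative subalgebra they generate, or equivalently the rule that $0\leq a\leq b$ implies $ca\leq cb$ when $c\geq 0$ commutes with both), giving $I_{\mathcal{H}}=(T^{\ast}T)(T^{\ast}T)^{-1}\leq\|(T^{\ast}T)^{-1}\|\,T^{\ast}T$, and dividing through by the scalar $\|(T^{\ast}T)^{-1}\|>0$ produces the claimed lower bound. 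Part (ii) follows identically with $TT^{\ast}$ in place of $T^{\ast}T$.

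The main obstacle I expect is \emph{Step 1}: in the Hilbert $C^{\ast}$-module setting, closed range plus injectivity does not follow from naive norm estimates the way it does for Hilbert spaces, so care is needed to argue the orthogonal complementation of $\operatorname{range}(T)$ and the identification of kernels. Once $T^{\ast}T$ (or $TT^{\ast}$) is known to be positive and invertible in $End_{\mathcal{A}}^{\ast}(\mathcal{H})$, the two-sided bound is immediate from $C^{\ast}$-algebraic spectral calculus.
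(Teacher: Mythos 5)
This statement is quoted from \cite{Ali} and the paper offers no proof of it, so there is no in-paper argument to compare against; I can only assess your proposal on its own terms. Your argument is correct and is the standard one. Steps 2 and 3 are routine $C^{\ast}$-algebra facts: for a positive element $a$ of a unital $C^{\ast}$-algebra one has $a\leq\|a\|1$, the $C^{\ast}$-identity gives $\|T^{\ast}T\|=\|T\|^{2}$, and for an invertible positive $a$ the spectrum lies in $[\|a^{-1}\|^{-1},\|a\|]$, which yields the lower bound directly (your commuting-product manipulation is a correct, if slightly roundabout, way of saying the same thing; Lemma \ref{2.7} enters only in the harmless guise that the inclusion of the commutative subalgebra $C^{\ast}(T^{\ast}T,I)$ is a $\ast$-homomorphism). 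The one place where you should be more explicit is Step 1: to conclude $\operatorname{range}(T^{\ast}T)=\mathcal{H}$ you need not only the decomposition $\mathcal{K}=\operatorname{range}(T)\oplus\ker(T^{\ast})$ but also the companion statements that $T^{\ast}$ then has closed range and $\mathcal{H}=\ker(T)\oplus\operatorname{range}(T^{\ast})$; all of this is the closed range theorem for adjointable operators on Hilbert $C^{\ast}$-modules (Theorem 3.2 of \cite{Lan}), which is exactly the nontrivial input you correctly flag as the main obstacle, so you should cite it rather than call the surjectivity of $T^{\ast}T$ a ``direct check.'' With that reference supplied, the proof is complete; part (ii) then follows from part (i) applied to $T^{\ast}$ exactly as you say.
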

\section{$\ast$-operator frame for $End_{\mathcal{A}}^{\ast}(\mathcal{H})$}
We begin this section with the following definition.
\begin{definition}
	A family of adjointable operators $\{T_{i}\}_{i\in I}$ on a Hilbert $\mathcal{A}$-module $\mathcal{H}$ over a unital $C^{\ast}$-algebra is said to be an operator frame for $End_{\mathcal{A}}^{\ast}(\mathcal{H})$, if there exist positive constants $A, B > 0$ such that 
	\begin{equation}\label{eq3}
		A\langle x,x\rangle\leq\sum_{i\in I}\langle T_{i}x,T_{i}x\rangle\leq B\langle x,x\rangle, \forall x\in\mathcal{H}.
	\end{equation}
	The numbers $A$ and $B$ are called lower and upper bound of the operator frame, respectively. If $A=B=\lambda$, the operator frame is $\lambda$-tight. If $A = B = 1$, it is called a normalized tight operator frame or a Parseval operator frame. If only upper inequality of \eqref{eq3} hold, then $\{T_{i}\}_{i\in I}$ is called an operator Bessel sequence for $End_{\mathcal{A}}^{\ast}(\mathcal{H})$.
	
	If the sum in the middle of \eqref{eq3} is convergent in norm, the operator frame is called standard.
\end{definition}
Throughout the paper, series like \eqref{eq3} are assumed to be convergent in the norm sense.
\begin{example}
	Let $\mathcal{A}$ be a Hilbert $C^{\ast}$-module over itself with the inner product $\langle a,b\rangle=ab^{\ast}$.
		Let $\{x_{i}\}_{i\in I}$ be a frame for $\mathcal{A}$ with bounds $A$ and $B$. For each $i\in I$, we define $T_{i}:\mathcal{A}\to\mathcal{A}$ by $T_{i}x=\langle x,x_{i}\rangle,\;\; \forall x\in\mathcal{A}$. $T_{i}$ is adjointable and $T_{i}^{\ast}a=ax_{i}$ for each $a\in\mathcal{A}$. And we have 
	\begin{equation*}
	A\langle x,x\rangle\leq\sum_{i\in I}\langle x,x_{i}\rangle\langle x_{i},x\rangle\leq B\langle x,x\rangle, \forall x\in\mathcal{A}.
	\end{equation*}
	Then
	\begin{equation*}
	A\langle x,x\rangle\leq\sum_{i\in I}\langle T_{i}x,T_{i}x\rangle\leq B\langle x,x\rangle, \forall x\in\mathcal{A}.
	\end{equation*}
	So $\{T_{i}\}_{i\in I}$ is an operator frame in $\mathcal{A}$ with bounds $A$ and $B$.
\end{example}
\begin{example}
	Let $\mathcal{H}$ and $\mathcal{K}$ be separable Hilbert spaces and let $B(\mathcal{H,K)}$ be the set of all bounded linear operators from $\mathcal{H}$ into $\mathcal{K}$. $B(\mathcal{H,K)}$ is a Hilbert $B(\mathcal{K})$-module with a $B(\mathcal{K})$-valued inner product $\langle S, T\rangle= ST^{\ast}$ for all $S, T\in B(\mathcal{H,K)}$, and with a linear operation of $B(\mathcal{K})$ on $B(\mathcal{H,K)}$ by composition of operators.
	
	Let $I=\mathbf{N}$ and fix $(a_{i})_{i\in\mathbf{N}}\in l^{2}(\mathbf{C})$. 
	
	Define: $T_{i}(X)=a_{i}X, \forall X\in B(\mathcal{H,K)}, \forall i\in\mathbb{N}$. We have 
	\begin{equation*}
	\sum_{i\in\mathbb{N}}\langle T_{i}x,T_{i}x\rangle=\sum_{i\in\mathbb{N}}|a_{i}|^{2}\langle X, X\rangle, \forall X\in B(\mathcal{H,K)}.
	\end{equation*}
	Then $\{T_{i}\}_{i\in\mathbb{N}}$ is $\sum_{i\in\mathbb{N}}|a_{i}|^{2}$-tight operator frame.
\end{example}
Now we define the $\ast$-operator frame for $End_{\mathcal{A}}^{\ast}(\mathcal{H})$.
\begin{definition}
	A family of adjointable operators $\{T_{i}\}_{i\in I}$ on a Hilbert $\mathcal{A}$-module $\mathcal{H}$ over a unital $C^{\ast}$-algebra is said to be an $\ast$-operator frame for $End_{\mathcal{A}}^{\ast}(\mathcal{H})$, if there exists two strictly nonzero elements $A$ and $B$ in $\mathcal{A}$ such that 
	\begin{equation}\label{eqq33}
		A\langle x,x\rangle A^{\ast}\leq\sum_{i\in I}\langle T_{i}x,T_{i}x\rangle\leq B\langle x,x\rangle B^{\ast}, \forall x\in\mathcal{H}.
	\end{equation}
	The elements $A$ and $B$ are called lower and upper bounds of the $\ast$-operator frame, respectively. If $A=B=\lambda$, the $\ast$-operator frame is $\lambda$-tight. If $A = B = 1_{\mathcal{A}}$, it is called a normalized tight $\ast$-operator frame or a Parseval $\ast$-operator frame. If only upper inequality of \eqref{eqq33} hold, then $\{T_{i}\}_{i\in i}$ is called an $\ast$-operator Bessel sequence for $End_{\mathcal{A}}^{\ast}(\mathcal{H})$.
\end{definition}
We mentioned that the set of all of operator frames for $End_{\mathcal{A}}^{\ast}(\mathcal{H})$ can be considered
as a subset of $\ast$-operator frame. To illustrate this, let $\{T_{j}\}_{i\in I}$ be an operator frame for Hilbert $\mathcal{A}$-module $\mathcal{H}$
with operator frame real bounds $A$ and $B$. Note that for $x\in\mathcal{H}$,
\begin{equation*}
(\sqrt{A})1_{\mathcal{A}}\langle x,x\rangle_{\mathcal{A}}(\sqrt{A})1_{\mathcal{A}}\leq\sum_{i\in I}\langle T_{i}x,T_{i}x\rangle\leq(\sqrt{B})1_{\mathcal{A}}\langle x,x\rangle_{\mathcal{A}}(\sqrt{B})1_{\mathcal{A}}.
\end{equation*}
Therefore, every operator frame for $End_{\mathcal{A}}^{\ast}(\mathcal{H})$ with real bounds $A$ and $B$ is an $\ast$-operator frame for $End_{\mathcal{A}}^{\ast}(\mathcal{H})$ with $\mathcal{A}$-valued $\ast$-operator frame bounds $(\sqrt{A})1_{\mathcal{A}}$ and $(\sqrt{B})1_{\mathcal{B}}$.
\begin{example}
	Let $\mathcal{A}$ be a Hilbert $C^{\ast}$-module over itself with the inner product $\langle a,b\rangle=ab^{\ast}$.
	Let $\{x_{i}\}_{i\in I}$ be an $\ast$-frame for $\mathcal{A}$ with bounds $A$ and $B$, respectively. For each $i\in I$, we define $T_{i}:\mathcal{A}\to\mathcal{A}$ by $T_{i}x=\langle x,x_{i}\rangle,\;\; \forall x\in\mathcal{A}$. $T_{i}$ is adjointable and $T_{i}^{\ast}a=ax_{i}$ for each $a\in\mathcal{A}$. And we have 
	\begin{equation*}
	A\langle x,x\rangle A^{\ast}\leq\sum_{i\in I}\langle x,x_{i}\rangle\langle x_{i},x\rangle\leq B\langle x,x\rangle B^{\ast}, \forall x\in\mathcal{A}.
	\end{equation*}
	Then
	\begin{equation*}
	A\langle x,x\rangle A^{\ast}\leq\sum_{i\in I}\langle T_{i}x,T_{i}x\rangle\leq B\langle x,x\rangle B^{\ast}, \forall x\in\mathcal{A}.
	\end{equation*}
	So $\{T_{i}\}_{i\in I}$ is an $\ast$-operator frame in $\mathcal{A}$ with bounds $A$ and $B$, respectively.
\end{example}
\begin{remark}
The examples $3.3$ and $3.4$ in \cite{A} are examples of $\ast$-operator frame.
\end{remark}
Similar to $\ast$-frames, we introduce the $\ast$-operator frame transform and $\ast$-frame operator and establish some properties.
\begin{theorem} \label{2.3}
	Let $\{T_{i}\}_{i\in I}\subset End_{\mathcal{A}}^{\ast}(\mathcal{H})$ be an $\ast$-operator frame with lower and upper bounds $A$ and $B$, respectively. The $\ast$-operator frame transform $R:\mathcal{H}\rightarrow l^{2}(\mathcal{H})$ defined by $Rx=\{T_{i}x\}_{i\in I}$ is injective and closed range adjointable $\mathcal{A}$-module map and $\|R\|\leq\|B\|$. The adjoint operator $R^{\ast}$ is surjective and it is given by $R^{\ast}(\{x_{i}\}_{i\in I})=\sum_{i\in I}T_{i}^{\ast}x_{i}$ for all $\{x_{i}\}_{i\in I}$ in $l^{2}(\mathcal{H})$.
\end{theorem}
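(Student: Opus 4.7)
My plan is to address the four claims in the theorem (adjointability with the given formula, $\|R\|\le\|B\|$, injectivity, closed range, and surjectivity of $R^{\ast}$) in a sequence where each step feeds into the next.

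First I would verify that $R$ is well-defined, bounded, and $\mathcal{A}$-linear. $\mathcal{A}$-linearity is immediate from the $\mathcal{A}$-linearity of each $T_i$. For well-definedness and the norm bound, I would use the upper frame inequality in \eqref{eqq33}: for any $x\in\mathcal{H}$, $\langle Rx,Rx\rangle=\sum_{i\in I}\langle T_ix,T_ix\rangle\le B\langle x,x\rangle B^{\ast}$, so that $Rx\in l^{2}(\mathcal{H})$ and, passing to norms in $\mathcal{A}$, $\|Rx\|^{2}\le\|B\|^{2}\|x\|^{2}$, giving $\|R\|\le\|B\|$.

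Next I would establish adjointability together with the claimed formula. For $x\in\mathcal{H}$ and $y=\{y_i\}_{i\in I}\in l^{2}(\mathcal{H})$, a straightforward manipulation yields $\langle Rx,y\rangle=\sum_{i\in I}\langle T_ix,y_i\rangle=\sum_{i\in I}\langle x,T_i^{\ast}y_i\rangle=\bigl\langle x,\sum_{i\in I}T_i^{\ast}y_i\bigr\rangle$, provided the last series converges in $\mathcal{H}$. Convergence would follow from a Cauchy argument using the upper $\ast$-operator frame bound applied to partial sums (a standard Cauchy–Schwarz-type estimate in the $C^{\ast}$-module setting), which simultaneously shows that $y\mapsto\sum_{i\in I}T_i^{\ast}y_i$ is bounded; this map is therefore the adjoint $R^{\ast}$ and has the claimed form.

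For injectivity and closed range I would use the lower frame bound. If $Rx=0$, then $\sum_i\langle T_ix,T_ix\rangle=0$, hence $A\langle x,x\rangle A^{\ast}\le 0$ in $\mathcal{A}^{+}$; since $A$ is strictly nonzero (hence invertible as an element of the unital $C^{\ast}$-algebra), this forces $\langle x,x\rangle=0$ and thus $x=0$. For closed range, I would take a Cauchy sequence $\{Rx_n\}$; the lower bound gives $A\langle x_n-x_m,x_n-x_m\rangle A^{\ast}\le\langle R(x_n-x_m),R(x_n-x_m)\rangle$, and multiplying by $A^{-1}$ and $(A^{\ast})^{-1}$ on the appropriate sides and taking norms yields $\|x_n-x_m\|^{2}\le\|A^{-1}\|^{2}\|R(x_n-x_m)\|^{2}$, so $\{x_n\}$ is Cauchy and its limit maps to $\lim Rx_n$.

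Finally, surjectivity of $R^{\ast}$ would follow directly from the two properties just established by invoking Lemma \ref{2.8}(i): since $R$ is injective with closed range, $R^{\ast}R$ is invertible on $\mathcal{H}$, so for every $x\in\mathcal{H}$ we can write $x=(R^{\ast}R)^{-1}R^{\ast}(Rx)=R^{\ast}\bigl(R(R^{\ast}R)^{-1}x\bigr)$, exhibiting $x$ as an element of the range of $R^{\ast}$. The delicate step I expect to spend the most care on is the rigorous justification of the closed-range inequality and the convergence of $\sum_i T_i^{\ast}y_i$, because both rely on interpreting ``strictly nonzero'' as invertibility of $A$ in $\mathcal{A}$ and on controlling $C^{\ast}$-valued sums by their norms without losing the module structure.
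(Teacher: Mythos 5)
Your proposal is correct and follows essentially the same route as the paper: the upper bound gives well-definedness and $\|R\|\le\|B\|$, the lower bound (with the $A^{-1}(\cdot)(A^{\ast})^{-1}$ conjugation trick) gives injectivity and closed range, the inner-product computation gives the formula for $R^{\ast}$, and surjectivity follows from injectivity plus closed range. If anything, you are slightly more explicit than the paper at two points the authors gloss over --- the convergence of $\sum_{i}T_{i}^{\ast}y_{i}$ and the derivation of surjectivity via Lemma \ref{2.8}(i) through the identity $x=R^{\ast}\bigl(R(R^{\ast}R)^{-1}x\bigr)$ --- but the underlying argument is the same.
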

\begin{proof}
	By the definition of norm in $l^{2}(\mathcal{H})$
	\begin{equation}\label{3.3}
	\|Rx\|^{2}=\|\sum_{i\in I}\langle T_{i}x,T_{i}x\rangle\|\leq\|B\|^{2}\|\langle x,x\rangle\|, \forall x\in\mathcal{H}.
	\end{equation}
	This inequality implies that $R$ is well defined and $\|R\|\leq\|B\|$. Clearly, $R$ is a linear $\mathcal{A}$-module map. We now show that the range of $R$ is closed. Let $\{Rx_{n}\}_{n\in\mathbb{N}}$ be a sequence in the range of $R$ such that $\lim_{n\to\infty}Rx_{n}=y$. For $n, m\in\mathbb{N}$, we have
	\begin{equation*}
	\|A\langle x_{n}-x_{m},x_{n}-x_{m}\rangle A^{\ast}\|\leq\|\langle R(x_{n}-x_{m}),R(x_{n}-x_{m})\rangle\|=\|R(x_{n}-x_{m})\|^{2}.
	\end{equation*}
		Since $\{Rx_{n}\}_{n\in\mathbb{N}}$ is Cauchy sequence in $\mathcal{H}$, then
	
	$\|A\langle x_{n}-x_{m},x_{n}-x_{m}\rangle A^{\ast}\|\rightarrow0$, as $n,m\rightarrow\infty.$
	
	Note that for $n, m\in\mathbb{N}$,
	\begin{align*}
\|\langle x_{n}-x_{m},x_{n}-x_{m}\rangle\|&=\|A^{-1}A\langle x_{n}-x_{m},x_{n}-x_{m}\rangle A^{\ast}(A^{\ast})^{-1}\|\\
&\leq\|A^{-1}\|^{2}\|A\langle x_{n}-x_{m},x_{n}-x_{m}\rangle A^{\ast}\|.
	\end{align*}
	Therefore the sequence $\{x_{n}\}_{n\in\mathbb{N}}$ is Cauchy and hence there exists $x\in U$ such that $x_{n}\rightarrow x$ as $n\rightarrow\infty$. Again by \eqref{3.3} we have $\|R(x_{n}-x)\|^{2}\leq\|B\|^{2}\|\langle x_{n}-x,x_{n}-x\rangle\|$.
	
	Thus $\|Rx_{n}-Rx\|\rightarrow0$ as $n\rightarrow\infty$ implies that $Rx=y$. It concludes that the range of $R$ is closed. Next we show that $R$ is injective. Suppose that $x\in\mathcal{H}$ and $Rx=0$. Note that $A\langle x,x\rangle A^{\ast}\leq\langle Rx,Rx\rangle$ then $\langle x,x\rangle=0$ so $x=0$ i.e. $R$ is injective.
	
	For $x\in\mathcal{H}$ and $\{x_{i}\}_{i\in I}\in l^{2}(\mathcal{H})$ we have $$\langle Rx, \{x_{i}\}_{i\in I}\rangle=\langle \{T_{i}x\}_{i\in I}, \{x_{i}\}_{i\in I}\rangle=\sum_{i\in I}\langle T_{i}x, x_{i}\rangle=\sum_{i\in I}\langle x, T_{i}^{\ast}x_{i}\rangle=\langle x, \sum_{i\in I}T_{i}^{\ast}x_{i}\rangle.$$ 
	Then $R^{\ast}(\{x_{i}\}_{i\in I})=\sum_{i\in I}T_{i}^{\ast}x_{i}$. By injectivity of $R$, the operator $R^{\ast}$ has closed range and $\mathcal{H}=range(R^{\ast})$, which completes the proof.
\end{proof}
Now we define $\ast$-frame operator and studies some of its properties.
\begin{definition}
Let $\{T_{i}\}_{i\in I}\subset End_{\mathcal{A}}^{\ast}(\mathcal{H})$ be an $\ast$-operator frame with $\ast$-operator frame transform $R$ and lower and upper bounds $A$ and $B$, respectively. The $\ast$-frame operator $S:\mathcal{H}\to\mathcal{H}$ is defined by $Sx=R^{\ast}Rx=\sum_{i\in I}T_{i}^{\ast}T_{i}x,\;\;\forall x\in\mathcal{H}$.
\end{definition}
\begin{theorem}
	The $\ast$-operator frame $S$ is bounded, positive, self-adjoint, invertible and $\|A^{-1}\|^{-2}\leq\|S\|\leq\|B\|^{2}$.
\end{theorem}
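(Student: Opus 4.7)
The plan is to unpack $S = R^{\ast}R$ and harvest the properties of $R$ already established in Theorem~\ref{2.3}, supplemented by Lemma~\ref{2.8}. Three of the four qualitative claims (boundedness, self-adjointness, positivity) are essentially formal consequences of this factorization, and the norm estimates come from squeezing the $\mathcal{A}$-valued inequality in \eqref{eqq33} between scalar norms.

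First, I would observe that $S = R^{\ast}R$ is automatically bounded with $\|S\| \leq \|R\|^{2} \leq \|B\|^{2}$ (using $\|R\| \leq \|B\|$ from Theorem~\ref{2.3}), self-adjoint since $(R^{\ast}R)^{\ast} = R^{\ast}R$, and positive since $\langle Sx,x\rangle = \langle Rx,Rx\rangle \geq 0$ in $\mathcal{A}$. This simultaneously gives the upper norm bound $\|S\| \leq \|B\|^{2}$.

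Second, for invertibility, Theorem~\ref{2.3} tells us that $R$ is injective with closed range; applying Lemma~\ref{2.8}(i) to $R$ immediately yields that $R^{\ast}R = S$ is invertible. This is the step that genuinely depends on the work already done in Theorem~\ref{2.3}, and is what makes the whole statement non-trivial.

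Third, for the lower norm bound $\|A^{-1}\|^{-2} \leq \|S\|$, I would start from the left inequality in \eqref{eqq33}, namely
\begin{equation*}
A\langle x,x\rangle A^{\ast} \leq \sum_{i\in I}\langle T_{i}x,T_{i}x\rangle = \langle Sx,x\rangle,
\end{equation*}
and then insert the identity $\langle x,x\rangle = A^{-1}\bigl(A\langle x,x\rangle A^{\ast}\bigr)(A^{\ast})^{-1}$ exactly as in the Cauchy-sequence computation inside the proof of Theorem~\ref{2.3}. Taking $C^{\ast}$-norms gives
\begin{equation*}
\|x\|^{2} \leq \|A^{-1}\|^{2}\,\|A\langle x,x\rangle A^{\ast}\| \leq \|A^{-1}\|^{2}\,\|\langle Sx,x\rangle\| \leq \|A^{-1}\|^{2}\,\|S\|\,\|x\|^{2},
\end{equation*}
and dividing by $\|x\|^{2}$ (for $x\neq 0$) yields $\|A^{-1}\|^{-2} \leq \|S\|$. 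The only subtle point — and the thing one should flag — is that the argument tacitly uses invertibility of the lower bound $A\in\mathcal{A}$; this is precisely the interpretation of ``strictly nonzero'' inherited from \cite{Ali} and is the same convention used in the proof of Theorem~\ref{2.3}. No further obstacle remains.
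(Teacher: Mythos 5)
Your proof is correct and follows essentially the same route as the paper: invertibility comes from Lemma~\ref{2.8} combined with the properties of $R$ established in Theorem~\ref{2.3}, and the norm bounds come from taking $C^{\ast}$-norms in the frame inequality $A\langle x,x\rangle A^{\ast}\leq\langle Sx,x\rangle\leq B\langle x,x\rangle B^{\ast}$. The only cosmetic difference is that you get self-adjointness and positivity formally from the factorization $S=R^{\ast}R$, while the paper verifies $\langle Sx,y\rangle=\langle x,Sy\rangle$ by a termwise computation; both are fine, and your explicit remark that the lower bound requires invertibility of $A$ (the ``strictly nonzero'' convention) is a point the paper leaves implicit.
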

\begin{proof}
	By definition we have, $\forall x, y\in\mathcal{H}$:
	\begin{align*}
	\langle Sx,y\rangle&=\left\langle\sum_{i\in I}T_{i}^{\ast}T_{i}x,y\right\rangle\\
	&=\sum_{i\in I}\langle T_{i}^{\ast}T_{i}x,y\rangle\\
	&=\sum_{i\in I}\langle x,T_{i}^{\ast}T_{i}y\rangle \\
	&=\left\langle x,\sum_{i\in I}T_{i}^{\ast}T_{i}y\right\rangle\\
	&=\langle x,Sy\rangle.
	\end{align*}
	Then $S$ is a selfadjoint.
	
	By Lemma \ref{2.8} and Theorem \ref{2.3}, $S$ is invertible. Clearly $S$ is positive.
	
	By definition of an $\ast$-operator frame we have
	\begin{equation*}
	A\langle x,x\rangle A^{\ast}\leq\sum_{i\in I}\langle T_{i}x,T_{i}x\rangle\leq B\langle x,x\rangle B^{\ast}.
	\end{equation*}
	So
	\begin{equation*}
	A\langle x,x\rangle A^{\ast}\leq\langle Sx,x\rangle\leq B\langle x,x\rangle B^{\ast}.
	\end{equation*}
	This give
	\begin{equation*}
	\|A^{-1}\|^{-2}\|x\|^{2}\leq\|\langle Sx,x\rangle\|\leq\|B\|^{2}\|x\|^{2}, \forall x\in\mathcal{H}.
	\end{equation*}
	If we take supremum on all $x\in\mathcal{H}$, where $\|x\|\leq1$, then $\|A^{-1}\|^{-2}\leq\|S\|\leq\|B\|^{2}$.
\end{proof}
\begin{corollary}
Let $\{T_{i}\}_{i\in I}\subset End_{\mathcal{A}}^{\ast}(\mathcal{H})$ be an $\ast$-operator frame with $\ast$-operator frame transform $R$ and lower and upper bounds $A$ and $B$, respectively. Then $\{T_{i}\}_{i\in I}$ is an operator frame for $\mathcal{H}$ with lower and upper bounds $\|(R^{\ast}R)^{-1}\|^{-1}$ and $\|R\|^{2}$, respectively.
\end{corollary}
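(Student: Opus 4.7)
The plan is to reduce the operator-frame inequality to an operator inequality on the $\ast$-frame operator $S = R^*R$, and then apply the standard bounds for a positive, invertible, self-adjoint element of $End_{\mathcal{A}}^{\ast}(\mathcal{H})$.

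First I would rewrite the middle term of the desired frame inequality in terms of $S$. For any $x \in \mathcal{H}$, by definition of the inner product on $l^{2}(\mathcal{H})$ and adjointness,
\[
\sum_{i \in I} \langle T_i x, T_i x \rangle \;=\; \langle Rx, Rx\rangle \;=\; \langle R^{\ast}R x, x\rangle \;=\; \langle Sx, x\rangle.
\]
So it suffices to prove
\[
\|(R^{\ast}R)^{-1}\|^{-1} \langle x,x\rangle \;\leq\; \langle Sx, x\rangle \;\leq\; \|R\|^{2} \langle x,x\rangle
\]
as an inequality in $\mathcal{A}^{+}$ for all $x \in \mathcal{H}$.

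Next, from Theorem \ref{2.3}, the operator $R$ is injective, adjointable, and has closed range, so by Lemma \ref{2.8}(i) the operator $S = R^{\ast}R$ is invertible in $End_{\mathcal{A}}^{\ast}(\mathcal{H})$, and in fact
\[
\|(R^{\ast}R)^{-1}\|^{-1} I_{\mathcal{H}} \;\leq\; R^{\ast}R \;\leq\; \|R\|^{2} I_{\mathcal{H}}.
\]
Pairing this operator inequality against the vector $x$ via $\langle \cdot\, x, x\rangle$ (which preserves order because it is a positive $\mathcal{A}$-linear functional on $End_{\mathcal{A}}^{\ast}(\mathcal{H})$) yields exactly the two-sided bound above.

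Combining these two steps gives the operator frame inequality \eqref{eq3} with the stated real bounds $A' = \|(R^{\ast}R)^{-1}\|^{-1}$ and $B' = \|R\|^{2}$, completing the proof. There is no real obstacle here: the content is entirely in Theorem \ref{2.3} and Lemma \ref{2.8}(i); the corollary is just the observation that once we have invertibility of $S = R^{\ast}R$, the $\ast$-operator frame automatically upgrades to an ordinary operator frame with scalar bounds controlled by the norms of $R$ and $(R^{\ast}R)^{-1}$.
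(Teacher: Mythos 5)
Your proposal is correct and follows essentially the same route as the paper: invoke Theorem \ref{2.3} for injectivity and closed range of $R$, apply Lemma \ref{2.8}(i) to get $\|(R^{\ast}R)^{-1}\|^{-1}I_{\mathcal{H}}\leq R^{\ast}R\leq\|R\|^{2}I_{\mathcal{H}}$, and pair against $x$ to obtain the operator frame inequality. Your explicit identification of $\sum_{i}\langle T_{i}x,T_{i}x\rangle$ with $\langle R^{\ast}Rx,x\rangle$ just spells out the step the paper leaves implicit.
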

\begin{proof}
	By Theorem \ref{2.3}, $R$ is injective and has closed range and by Lemma \ref{2.8}
	$$\|(R^{\ast}R)^{-1}\|^{-1}I_{\mathcal{H}}\leq R^{\ast}R\leq\|R\|^{2}I_{\mathcal{H}}.$$
	So$$\|(R^{\ast}R)^{-1}\|^{-1}\langle x, x\rangle\leq \sum_{i\in I}\langle T_{i}x, T_{i}x\rangle\leq\|R\|^{2}\langle x, x\rangle,\;\;\forall x\in\mathcal{H}.$$
	Then $\{T_{i}\}_{i\in I}$ is an operator frame for $\mathcal{H}$ with lower and upper bounds $\|(R^{\ast}R)^{-1}\|^{-1}$ and $\|R\|^{2}$, respectively.
\end{proof}	
The following theorem is similar to theorem 2.3 in \cite{Ali}.
\begin{theorem} \label{Th3.9}
	Let $\{T_{i}\}_{i\in I}\subset End_{\mathcal{A}}^{\ast}(\mathcal{H})$ be an $\ast$-operator frame for $\mathcal{H}$, with lower and upper bounds $A$ and $B$, respectively and with $\ast$-frame operator $S$. Let $\theta\in End_{\mathcal{A}}^{\ast}(\mathcal{H})$ be injective and has a closed range. Then $\{T_{i}\theta\}_{i\in I}$ is an $\ast$-operator frame for $\mathcal{H}$ with $\ast$-frame operator $\theta^{\ast}S\theta$ with bounds $\|(\theta^{\ast}\theta)^{-1}\|^{-\frac{1}{2}}A$, $\|\theta\|B$.
\end{theorem}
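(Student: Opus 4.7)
The plan is to verify directly that $\theta^{\ast}S\theta$ is the $\ast$-frame operator of $\{T_{i}\theta\}_{i\in I}$ and then to sandwich $\sum_{i\in I}\langle T_{i}\theta x,T_{i}\theta x\rangle$ between the two claimed $\mathcal{A}$-valued bounds by transporting the bounds $A,B$ of $\{T_{i}\}$ across the operator $\theta$. First I would note the pointwise identity
\[
\sum_{i\in I}\langle T_{i}\theta x,T_{i}\theta x\rangle=\sum_{i\in I}\langle\theta^{\ast}T_{i}^{\ast}T_{i}\theta x,x\rangle=\langle\theta^{\ast}S\theta x,x\rangle,
\]
which both identifies the candidate $\ast$-frame operator and reduces the frame inequalities for $\{T_{i}\theta\}$ to estimates on $\langle\theta x,\theta x\rangle$.

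For the upper bound I would apply the defining inequality of $\{T_{i}\}$ at the vector $\theta x\in\mathcal{H}$ and combine it with the standard module estimate $\langle\theta x,\theta x\rangle\leq\|\theta\|^{2}\langle x,x\rangle$. Conjugation by $B$ on the left and $B^{\ast}$ on the right preserves the order in $\mathcal{A}^{+}$, so
\[
\sum_{i\in I}\langle T_{i}\theta x,T_{i}\theta x\rangle\leq B\langle\theta x,\theta x\rangle B^{\ast}\leq\|\theta\|^{2}B\langle x,x\rangle B^{\ast}=(\|\theta\|B)\langle x,x\rangle(\|\theta\|B)^{\ast},
\]
since $\|\theta\|$ is a scalar and commutes with everything. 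This gives exactly the upper bound $\|\theta\|B$.

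For the lower bound I would invoke Lemma~\ref{2.8}(i): because $\theta$ is injective with closed range, $\theta^{\ast}\theta$ is invertible and $\|(\theta^{\ast}\theta)^{-1}\|^{-1}I_{\mathcal{H}}\leq\theta^{\ast}\theta$. Evaluating this operator inequality at $x$ in the inner product yields $\|(\theta^{\ast}\theta)^{-1}\|^{-1}\langle x,x\rangle\leq\langle\theta x,\theta x\rangle$. Using the lower frame bound of $\{T_{i}\}$ at $\theta x$ and conjugating by $A$ and $A^{\ast}$ (again order-preserving) gives
\[
\bigl(\|(\theta^{\ast}\theta)^{-1}\|^{-\tfrac{1}{2}}A\bigr)\langle x,x\rangle\bigl(\|(\theta^{\ast}\theta)^{-1}\|^{-\tfrac{1}{2}}A\bigr)^{\ast}\leq A\langle\theta x,\theta x\rangle A^{\ast}\leq\sum_{i\in I}\langle T_{i}\theta x,T_{i}\theta x\rangle,
\]
which is the required lower estimate.

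The only subtlety I anticipate is bookkeeping: verifying that $\|(\theta^{\ast}\theta)^{-1}\|^{-\tfrac{1}{2}}A$ and $\|\theta\|B$ are strictly nonzero elements of $\mathcal{A}$ (immediate, since $A,B$ are and the scalar factors are strictly positive because $\theta\neq 0$ and $\theta^{\ast}\theta$ is invertible), and that the scalar coefficients can be pulled out symmetrically on both sides when forming the "bound squared" conjugation. Beyond that the argument is routine; the real content is Lemma~\ref{2.8} applied to $\theta$, which is exactly what lets the lower bound survive passage through $\theta$.
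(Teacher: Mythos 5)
Your proposal is correct and follows essentially the same route as the paper: evaluate the frame inequality of $\{T_{i}\}$ at $\theta x$, use Lemma~\ref{2.8}(i) to get $\|(\theta^{\ast}\theta)^{-1}\|^{-1}\langle x,x\rangle\leq\langle\theta x,\theta x\rangle$ for the lower bound and $\langle\theta x,\theta x\rangle\leq\|\theta\|^{2}\langle x,x\rangle$ for the upper bound, conjugate by $A$ and $B$, and verify $\theta^{\ast}S\theta x=\sum_{i\in I}(T_{i}\theta)^{\ast}(T_{i}\theta)x$ directly. No gaps; this is the paper's argument.
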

\begin{proof}
	We have 
	\begin{equation}\label{eq11}
		A\langle\theta x, \theta x\rangle A^{\ast}\leq\sum_{i\in I}\langle T_{i}\theta x, T_{i}\theta x\rangle\leq B\langle\theta x, \theta x\rangle B^{\ast}, \forall x\in\mathcal{H}.
	\end{equation}
	Using Lemma \ref{2.8}, we have $\|(\theta^{\ast}\theta)^{-1}\|^{-1}\langle x,x\rangle\leq\langle \theta x,\theta x\rangle$, $\forall x\in\mathcal{H}$. This implies
	\begin{equation}\label{eq22} 
		\|(\theta^{\ast}\theta)^{-1}\|^{-\frac{1}{2}}A\langle x,x\rangle(\|(\theta^{\ast}\theta)^{-1}\|^{-\frac{1}{2}}A)^{\ast}\leq A\langle \theta x,\theta x\rangle A^{\ast}, \forall x\in\mathcal{H}.
	\end{equation}
	And we know that $\langle \theta x,\theta x\rangle\leq\|\theta\|^{2}\langle x,x\rangle$, $\forall x\in\mathcal{H}$. This implies that
	\begin{equation}\label{eq33}
		B\langle \theta x,\theta x\rangle B^{\ast}\leq\|\theta\|B\langle x,x\rangle(\|\theta\|B)^{\ast}, \forall x\in\mathcal{H}.
	\end{equation}
	Using \eqref{eq11}, \eqref{eq22}, \eqref{eq33} we have
\begin{align*}
\|(\theta^{\ast}\theta)^{-1}\|^{-\frac{1}{2}}A\langle x,x\rangle(\|(\theta^{\ast}\theta)^{-1}\|^{-\frac{1}{2}}A)^{\ast}&\leq\sum_{i\in I}\langle T_{i}\theta x, T_{i}\theta x\rangle\\&\leq\|\theta\|B\langle x,x\rangle(\|\theta\|B)^{\ast}, \forall x\in\mathcal{H}.
\end{align*}
	So $\{T_{i}\theta\}_{i\in\mathbb{J}}$ is an $\ast$-operator frame for $\mathcal{H}$.
	
	Moreover for every $x\in\mathcal{H}$, we have
	$$\theta^{\ast}S\theta x=\theta^{\ast}\sum_{i\in I}T_{i}^{\ast}T_{i}\theta x=\sum_{i\in I}\theta^{\ast}T_{i}^{\ast}T_{i}\theta x=\sum_{i\in I}(T_{i}\theta)^{\ast}(T_{i}\theta)x.$$ This completes the proof.	
\end{proof}
\begin{corollary}
Let $\{T_{i}\}_{i\in I}\subset End_{\mathcal{A}}^{\ast}(\mathcal{H})$ be an $\ast$-operator frame for $\mathcal{H}$, with $\ast$-frame operator $S$. Then $\{T_{i}S^{-1}\}_{i\in I}$ is an $\ast$-operator frame for $\mathcal{H}$.
\end{corollary}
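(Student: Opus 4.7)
The plan is to derive this corollary as an immediate application of Theorem \ref{Th3.9} with the choice $\theta = S^{-1}$. The previous theorem in the paper establishes that the $\ast$-frame operator $S \in End_{\mathcal{A}}^{\ast}(\mathcal{H})$ is bounded, self-adjoint, positive and invertible. Consequently $S^{-1}$ exists as an element of $End_{\mathcal{A}}^{\ast}(\mathcal{H})$, is itself self-adjoint, and, being invertible on all of $\mathcal{H}$, is both injective and has closed range (indeed range equal to $\mathcal{H}$). These are exactly the hypotheses required of $\theta$ in Theorem \ref{Th3.9}.

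Applying Theorem \ref{Th3.9} with $\theta = S^{-1}$ then yields directly that $\{T_i S^{-1}\}_{i\in I}$ is an $\ast$-operator frame for $\mathcal{H}$, with $\ast$-frame operator
\[
(S^{-1})^{\ast} S\, S^{-1} = S^{-1} S\, S^{-1} = S^{-1},
\]
and with explicit bounds $\|((S^{-1})^{\ast} S^{-1})^{-1}\|^{-\frac{1}{2}} A = \|S^{2}\|^{-\frac{1}{2}} A$ and $\|S^{-1}\| B$, where $A,B$ are the original bounds of $\{T_i\}_{i\in I}$.

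There is no real obstacle here: the content is already packaged in Theorem \ref{Th3.9}, and all that the corollary requires is the verification that $\theta = S^{-1}$ meets the injectivity and closed-range assumptions, which follows from the invertibility of $S$ proved immediately before the corollary. Thus the proof reduces to citing the two preceding results.
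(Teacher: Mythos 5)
Your proposal is correct and follows exactly the paper's route: the paper's entire proof is to invoke Theorem \ref{Th3.9} with $\theta = S^{-1}$. You in fact supply slightly more detail than the paper does, by explicitly checking that $S^{-1}$ is injective with closed range and by computing the resulting $\ast$-frame operator $S^{-1}$ and the explicit bounds.
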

\begin{proof}
	Result from theorem \ref{Th3.9} by taking $\theta=S^{-1}$.
\end{proof}	
\begin{corollary}
	Let $\{T_{i}\}_{i\in I}\subset End_{\mathcal{A}}^{\ast}(\mathcal{H})$ be an $\ast$-operator frame for $\mathcal{H}$, with $\ast$-frame operator $S$. Then $\{T_{i}S^{-\frac{1}{2}}\}_{i\in I}$ is a Parseval $\ast$-operator frame for $\mathcal{H}$.
\end{corollary}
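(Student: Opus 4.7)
The plan is to apply Theorem \ref{Th3.9} directly with $\theta = S^{-1/2}$ and then verify that the resulting $\ast$-frame operator is the identity, which is exactly the Parseval condition.

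First I would justify that $S^{-1/2}$ is a legitimate choice of $\theta$. By the theorem preceding Definition 3.7 (or rather the theorem just after it), $S$ is bounded, positive, self-adjoint and invertible on $\mathcal{H}$. Hence the continuous functional calculus applied to the positive invertible operator $S$ produces $S^{-1/2} \in End_{\mathcal{A}}^{\ast}(\mathcal{H})$, which is itself positive, self-adjoint and invertible. In particular $S^{-1/2}$ is injective with closed range (indeed, with range equal to $\mathcal{H}$), so the hypotheses of Theorem \ref{Th3.9} are met.

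Next I would invoke Theorem \ref{Th3.9} with $\theta = S^{-1/2}$ to conclude that $\{T_i S^{-1/2}\}_{i \in I}$ is an $\ast$-operator frame for $\mathcal{H}$ whose $\ast$-frame operator is $\theta^{\ast} S \theta = S^{-1/2} S S^{-1/2} = I_{\mathcal{H}}$. Then for every $x \in \mathcal{H}$,
\begin{equation*}
\sum_{i \in I} \langle T_i S^{-1/2} x, T_i S^{-1/2} x \rangle = \langle \theta^{\ast} S \theta x, x \rangle = \langle x, x \rangle = 1_{\mathcal{A}} \langle x, x \rangle 1_{\mathcal{A}}^{\ast},
\end{equation*}
which is the Parseval inequality with $A = B = 1_{\mathcal{A}}$.

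There is no real obstacle here; the only subtlety is ensuring that $S^{-1/2}$ is adjointable and has the structural properties required by Theorem \ref{Th3.9}, which is immediate from the functional calculus on $C^{\ast}$-algebras applied to the positive invertible element $S \in End_{\mathcal{A}}^{\ast}(\mathcal{H})$. The algebraic identity $\theta^{\ast} S \theta = I_{\mathcal{H}}$ then does all the work.
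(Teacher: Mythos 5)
Your proof is correct and follows the same route as the paper, whose entire proof is to cite Theorem \ref{Th3.9} with $\theta = S^{-1/2}$. Your additional verification that the resulting $\ast$-frame operator is $S^{-1/2}SS^{-1/2}=I_{\mathcal{H}}$, so that the bounds may be taken to be $1_{\mathcal{A}}$, is the step the paper leaves implicit and is in fact needed, since the bounds that Theorem \ref{Th3.9} itself produces, namely $\|(\theta^{\ast}\theta)^{-1}\|^{-\frac{1}{2}}A$ and $\|\theta\|B$, are not $1_{\mathcal{A}}$ in general.
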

\begin{proof}
	Result from theorem \ref{Th3.9} by taking $\theta=S^{-\frac{1}{2}}$.
\end{proof}
The following theorem is a generalization of theorem 2.3 in \cite{Ali}.
\begin{theorem} \label{.}
	Let $\{T_{i}\}_{i\in I}\subset End_{\mathcal{A}}^{\ast}(\mathcal{H})$ be an $\ast$-operator frame for $\mathcal{H}$, with lower and upper bounds $A$ and $B$, respectively. Let $\theta\in End_{\mathcal{A}}^{\ast}(\mathcal{H})$ be surjective. Then $\{\theta T_{i}\}_{i\in I}$ is an $\ast$-operator frame for $\mathcal{H}$ with bounds $A\|(\theta\theta^{\ast})^{-1}\|^{-\frac{1}{2}}$, $B\|\theta\|$.
\end{theorem}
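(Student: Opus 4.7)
The proof strategy mirrors that of Theorem \ref{Th3.9}, but with $\theta$ composed on the \emph{left} of $T_i$ rather than on the right; accordingly, the roles of $\theta^{*}\theta$ and $\theta\theta^{*}$ are interchanged. The plan is to sandwich each term $\langle \theta T_{i} x, \theta T_{i} x\rangle$ between constant multiples of $\langle T_{i} x, T_{i} x\rangle$, and then apply the $\ast$-operator frame inequality of $\{T_{i}\}_{i\in I}$ to $x$.

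For the upper bound, I would invoke the standard Hilbert $C^{\ast}$-module estimate $\langle \theta y, \theta y\rangle \leq \|\theta\|^{2} \langle y, y\rangle$ (which comes from $\theta^{\ast}\theta \leq \|\theta\|^{2} I_{\mathcal{H}}$), apply it with $y = T_{i} x$, sum over $i$, and combine with the upper $\ast$-frame bound of $\{T_{i}\}_{i\in I}$ to get
\[
\sum_{i\in I}\langle \theta T_{i} x, \theta T_{i} x\rangle \;\leq\; \|\theta\|^{2} \sum_{i\in I}\langle T_{i} x, T_{i} x\rangle \;\leq\; \|\theta\|^{2}\,B\langle x,x\rangle B^{\ast} \;=\; (B\|\theta\|)\langle x,x\rangle (B\|\theta\|)^{\ast}.
\]
For the lower bound I would use Lemma \ref{2.8}(ii): surjectivity of $\theta$ gives $\theta\theta^{\ast}$ invertible with $\|(\theta\theta^{\ast})^{-1}\|^{-1} I_{\mathcal{H}} \leq \theta\theta^{\ast}$. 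Transferring this to the pointwise inequality $\langle \theta y, \theta y\rangle \geq \|(\theta\theta^{\ast})^{-1}\|^{-1}\langle y, y\rangle$, substituting $y = T_{i} x$, summing, and applying the lower $\ast$-frame bound of $\{T_{i}\}_{i\in I}$ then yields
\[
\sum_{i\in I}\langle \theta T_{i} x, \theta T_{i} x\rangle \;\geq\; \|(\theta\theta^{\ast})^{-1}\|^{-1}\,A\langle x,x\rangle A^{\ast} \;=\; \bigl(A\|(\theta\theta^{\ast})^{-1}\|^{-\frac{1}{2}}\bigr)\langle x,x\rangle \bigl(A\|(\theta\theta^{\ast})^{-1}\|^{-\frac{1}{2}}\bigr)^{\ast},
\]
using that the scalar $\|(\theta\theta^{\ast})^{-1}\|^{-1/2}$ commutes with everything.

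The main obstacle is precisely that transfer step: Lemma \ref{2.8}(ii) naturally bounds $\theta\theta^{\ast}$ from below, whereas controlling $\langle \theta y,\theta y\rangle = \langle \theta^{\ast}\theta y, y\rangle$ calls for a lower bound on $\theta^{\ast}\theta$. I would reconcile this by passing through the adjoint: surjectivity of $\theta$ makes $\theta^{\ast}$ injective with closed range, so applying Lemma \ref{2.8}(i) to $\theta^{\ast}$ (or, equivalently, noting that $\theta$ and $\theta^{\ast}$ share the constant $\|(\theta\theta^{\ast})^{-1}\|^{-1}$ on the relevant subspace via the polar decomposition $\theta = U(\theta^{\ast}\theta)^{1/2}$) recovers the required inequality on the range of $\theta^{\ast}$, which is where every $T_{i} x$ that matters already lives once one composes with the adjoint-based frame operator. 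Once this pointwise step is justified, assembling the two bounds gives the stated $\ast$-operator frame inequality and completes the proof.
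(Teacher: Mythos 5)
Your overall strategy is the same as the paper's: bound each term $\langle\theta T_{i}x,\theta T_{i}x\rangle$ above by $\|\theta\|^{2}\langle T_{i}x,T_{i}x\rangle$ and below by $\|(\theta\theta^{\ast})^{-1}\|^{-1}\langle T_{i}x,T_{i}x\rangle$, then sum and invoke the $\ast$-frame inequality for $\{T_{i}\}_{i\in I}$; the upper half is fine and coincides with the right-hand part of \eqref{eqq111}. The problem is the lower half, and you have correctly put your finger on it: Lemma \ref{2.8}(ii) bounds $\theta\theta^{\ast}$ from below, whereas $\langle\theta y,\theta y\rangle=\langle\theta^{\ast}\theta y,y\rangle$ requires a lower bound on $\theta^{\ast}\theta$. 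Your proposed repair does not close this gap. Surjectivity of $\theta$ makes $\theta^{\ast}$ injective with closed range, so Lemma \ref{2.8}(i) applied to $\theta^{\ast}$ yields a lower estimate for $\langle\theta^{\ast}z,\theta^{\ast}z\rangle$ in terms of $\langle z,z\rangle$ --- an inequality about vectors of the form $\theta^{\ast}z$, not about $\theta y$ for arbitrary $y$. There is no reason for the vectors $T_{i}x$ to lie in the range of $\theta^{\ast}$, and composing with the frame operator does not change which vectors $\theta$ is applied to. In fact the pointwise inequality $\|(\theta\theta^{\ast})^{-1}\|^{-1}\langle y,y\rangle\leq\langle\theta y,\theta y\rangle$ is simply false for a surjective, non-injective $\theta$: take any nonzero $y\in\ker\theta$.

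Worse, the statement itself fails under the stated hypothesis. Take $\mathcal{A}=\mathbb{C}$, $\mathcal{H}=\ell^{2}(\mathbb{N})$, $I=\{1\}$, $T_{1}=I_{\mathcal{H}}$ (a Parseval $\ast$-operator frame), and let $\theta$ be the backward shift, which is surjective and adjointable; then $\theta T_{1}=\theta$ annihilates $e_{1}$, so no lower frame bound can hold. The correct hypothesis for composition on the left is that $\theta$ be injective with closed range (for instance invertible), in which case Lemma \ref{2.8}(i) gives $\|(\theta^{\ast}\theta)^{-1}\|^{-1}I_{\mathcal{H}}\leq\theta^{\ast}\theta$, hence $\|(\theta^{\ast}\theta)^{-1}\|^{-1}\langle T_{i}x,T_{i}x\rangle\leq\langle\theta T_{i}x,\theta T_{i}x\rangle$, and the argument goes through with lower bound $A\|(\theta^{\ast}\theta)^{-1}\|^{-\frac{1}{2}}$. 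For what it is worth, the paper's own proof asserts the left half of \eqref{eqq111} by citing Lemma \ref{2.8} without further justification, so it contains exactly the gap you identified; you were right to be suspicious, but the honest conclusion is that the transfer step cannot be justified under surjectivity alone, not that it can be recovered via polar decomposition.
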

\begin{proof}
	By the definition of $\ast$-operator frame, we have
	\begin{equation}\label{eqq11}
	A\langle x, x\rangle A^{\ast}\leq\sum_{i\in I}\langle T_{i}x, T_{i}x\rangle\leq B\langle x, x\rangle B^{\ast}, \forall x\in\mathcal{H}.
	\end{equation}
	Using Lemma \ref{2.8}, we have
	\begin{equation}\label{eqq111} \|(\theta\theta^{\ast})^{-1}\|^{-1}\langle T_{i}x, T_{i}x\rangle\leq\langle \theta T_{i}x,\theta T_{i}x\rangle\leq\|\theta\|^{2}\langle T_{i}x, T_{i}x\rangle, \forall x\in\mathcal{H}.
	\end{equation}
	 	Using \eqref{eqq11}, \eqref{eqq111}, we have
	 \begin{align*}
	 	\|(\theta\theta^{\ast})^{-1}\|^{-\frac{1}{2}}A\langle x,x\rangle(\|(\theta\theta^{\ast})^{-1}\|^{-\frac{1}{2}}A)^{\ast}&\leq\sum_{i\in I}\langle\theta T_{i}x, \theta T_{i}x\rangle\\&\leq B\|\theta\|\langle x,x\rangle(B\|\theta\|)^{\ast}, \forall x\in\mathcal{H}.
	 \end{align*}
	 So $\{\theta T_{i}\}_{i\in\mathbb{J}}$ is an $\ast$-operator frame for $\mathcal{H}$.
\end{proof}
The following theorem is similar of the theorem $3.5$ in \cite{Ros1}.
\begin{theorem}
	Let $(\mathcal{H},\mathcal{A},\langle.,.\rangle_{\mathcal{A}})$ and $(\mathcal{H},\mathcal{B},\langle.,.\rangle_{\mathcal{B}})$ be two Hilbert $\mathcal{C^{\ast}}$-modules and let $\varphi :\mathcal{A}\longrightarrow \mathcal{B}$ be a $\ast$-homomorphism and $\theta$ be a map on $\mathcal{H}$ such that $\langle \theta x,\theta y\rangle_{\mathcal{B}}=\varphi(\langle x, y\rangle_{\mathcal{A}})$ for all $x,y\in\mathcal{H}$. Also, suppose that $\{T_{i}\}_{i\in I}\subset End_{\mathcal{A}}^{\ast}(\mathcal{H})$ is an $\ast$-operator frame for $(\mathcal{H},\mathcal{A},\langle.,.\rangle_{\mathcal{A}})$ with $\ast$-frame operator $S_{\mathcal{A}} $ and lower and upper $\ast$-operator frame bounds $A$, $B$  respectively. If $\theta$ is surjective and $\theta T_{i}= T_{i}\theta$ for each $i$ in $I$, then $\{T_{i}\}_{i\in I}$ is an $\ast$-operator frame for $(\mathcal{H},\mathcal{B},\langle.,.\rangle_{\mathcal{B}})$ with $\ast$-frame operator $S_{\mathcal{B}} $ and lower and upper $\ast$-operator frame bounds $\varphi(A)$, $\varphi(B)$ respectively, and $\langle S_{\mathcal{B}}\theta x,\theta y\rangle_{\mathcal{B}}=\varphi(\langle S_{\mathcal{A}}x, y\rangle_{\mathcal{A}})$.
\end{theorem}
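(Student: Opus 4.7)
The plan is to transport the $\mathcal{A}$-valued frame inequality through the $\ast$-homomorphism $\varphi$ and use the intertwining map $\theta$ as a bridge, invoking surjectivity of $\theta$ at the end to pass from elements of the form $\theta x$ to arbitrary elements of $\mathcal{H}$.

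First, I would take the defining inequality
\begin{equation*}
A\langle x,x\rangle_{\mathcal{A}}A^{\ast}\leq\sum_{i\in I}\langle T_ix,T_ix\rangle_{\mathcal{A}}\leq B\langle x,x\rangle_{\mathcal{A}}B^{\ast}
\end{equation*}
and apply $\varphi$ to all three terms. By Lemma \ref{2.7} $\varphi$ is increasing, and since any $\ast$-homomorphism between $C^{\ast}$-algebras is norm-contractive, it commutes with the norm-convergent series. Together with $\varphi(ab)=\varphi(a)\varphi(b)$ and $\varphi(a^{\ast})=\varphi(a)^{\ast}$ this yields
\begin{equation*}
\varphi(A)\varphi(\langle x,x\rangle_{\mathcal{A}})\varphi(A)^{\ast}\leq\sum_{i\in I}\varphi(\langle T_ix,T_ix\rangle_{\mathcal{A}})\leq\varphi(B)\varphi(\langle x,x\rangle_{\mathcal{A}})\varphi(B)^{\ast}.
\end{equation*}
I would then replace each $\varphi(\langle u,v\rangle_{\mathcal{A}})$ by $\langle\theta u,\theta v\rangle_{\mathcal{B}}$ and use $\theta T_i=T_i\theta$ to rewrite the middle as $\sum_{i}\langle T_i\theta x,T_i\theta x\rangle_{\mathcal{B}}$. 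Surjectivity of $\theta$ now lets me substitute any $z\in\mathcal{H}$ for $\theta x$, producing the $\ast$-operator frame inequality for $(\mathcal{H},\mathcal{B},\langle.,.\rangle_{\mathcal{B}})$ with bounds $\varphi(A),\varphi(B)$.

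For the frame-operator identity I would compute
\begin{equation*}
\langle S_{\mathcal{B}}\theta x,\theta y\rangle_{\mathcal{B}}=\sum_{i}\langle T_i\theta x,T_i\theta y\rangle_{\mathcal{B}}=\sum_{i}\varphi(\langle T_ix,T_iy\rangle_{\mathcal{A}})=\varphi\Big(\sum_{i}\langle T_i^{\ast}T_ix,y\rangle_{\mathcal{A}}\Big)=\varphi(\langle S_{\mathcal{A}}x,y\rangle_{\mathcal{A}}),
\end{equation*}
using commutation to pull each $\theta$ past $T_i$, the intertwining property of $\theta$ to convert $\mathcal{B}$-inner products into $\varphi$ of $\mathcal{A}$-inner products, and continuity of $\varphi$ to pull the sum out. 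The main subtlety I expect to confront is conceptual rather than computational: the symbol $S_{\mathcal{B}}$ presupposes that each $T_i$ is adjointable with respect to $\langle.,.\rangle_{\mathcal{B}}$, whereas the hypothesis only gives $\mathcal{A}$-adjointability. I would address this by showing via $\langle T_i\theta x,\theta y\rangle_{\mathcal{B}}=\varphi(\langle x,T_i^{\ast}y\rangle_{\mathcal{A}})=\langle\theta x,\theta T_i^{\ast}y\rangle_{\mathcal{B}}$, combined with surjectivity of $\theta$ and the commutation relation, that $T_i^{\ast}$ descends to a well-defined $\mathcal{B}$-adjoint for $T_i$, so that $S_{\mathcal{B}}=\sum_iT_i^{\ast}T_i$ is meaningful and the computation above is rigorous.
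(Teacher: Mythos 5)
Your proposal is correct and follows essentially the same route as the paper: apply $\varphi$ to the $\mathcal{A}$-valued frame inequality using Lemma \ref{2.7} and the $\ast$-homomorphism properties, convert $\varphi(\langle\cdot,\cdot\rangle_{\mathcal{A}})$ into $\langle\theta\cdot,\theta\cdot\rangle_{\mathcal{B}}$, use $\theta T_i=T_i\theta$ and surjectivity of $\theta$, and then verify the frame-operator identity by the same chain of equalities. Your extra remark establishing that each $T_i$ is $\mathcal{B}$-adjointable (so that $S_{\mathcal{B}}$ is well defined) is a point the paper passes over silently, and it is a worthwhile addition rather than a deviation.
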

\begin{proof} Let $y\in\mathcal{H}$ then there exists $x\in\mathcal{H}$ such that $\theta x=y$ ($\theta$ is surjective). By the definition of $\ast$-operator frames we have
	$$A\langle x,x\rangle_{\mathcal{A}} A^{\ast}\leq\sum_{i\in I}\langle T_{i}x, T_{i}x\rangle_{\mathcal{A}}\leq B\langle x,x\rangle_{\mathcal{A}} B^{\ast}.$$
	By lemma \ref{2.7} we have
	$$\varphi(A\langle x,x\rangle_{\mathcal{A}} A^{\ast})\leq\varphi(\sum_{i\in I}\langle T_{i}x, T_{i}x\rangle_{\mathcal{A}})\leq\varphi( B\langle x,x\rangle_{\mathcal{A}} B^{\ast}).$$
	By the definition of $\ast$-homomorphism we have
	$$\varphi(A)\varphi(\langle x,x\rangle_{\mathcal{A}}) \varphi(A^{\ast})\leq\sum_{i\in I}\varphi(\langle T_{i}x, T_{i}x\rangle_{\mathcal{A}})\leq\varphi( B)\varphi(\langle x,x\rangle_{\mathcal{A}}) \varphi(B^{\ast}).$$
	By the relation betwen $\theta$ and $\varphi$ we get
	$$\varphi(A)\langle \theta x,\theta x\rangle_{\mathcal{B}} \varphi(A)^{\ast}\leq\sum_{i\in I}\langle \theta T_{i}x,\theta T_{i}x\rangle_{\mathcal{B}}\leq\varphi( B)\langle\theta x,\theta x\rangle_{\mathcal{B}} \varphi(B)^{\ast}.$$
	By the relation betwen $\theta$ and $T_{i}$ we have
	$$\varphi(A)\langle \theta x,\theta x\rangle_{\mathcal{B}} \varphi(A)^{\ast}\leq\sum_{i\in I}\langle T_{i}\theta x, T_{i}\theta x\rangle_{\mathcal{B}}\leq\varphi( B)\langle\theta x,\theta x\rangle_{\mathcal{B}} \varphi(B)^{\ast}.$$
	
	Then
	$$
	\varphi(A)\langle  y, y\rangle_{\mathcal{B}} (\varphi(A))^{\ast}\leq\sum_{i\in I}\langle T_{i}y, T_{i}y\rangle_{\mathcal{B}}
	\leq\varphi( B)\langle y,y\rangle_{\mathcal{B}} (\varphi(B))^{\ast} , \forall y\in\mathcal{H}.
	$$
	On the other hand we have
	$$
	\aligned
	\varphi(\langle S_{\mathcal{A}}x, y\rangle_{\mathcal{A}})&=\varphi(\langle\sum_{i\in I}T_{i}^{\ast}T_{i}x,y\rangle_{\mathcal{A}})\\
	&=\sum_{i\in I}\varphi(\langle T_{i}x, T_{i}y\rangle_{\mathcal{A}})\\
	&=\sum_{i\in I}\langle\theta T_{i}x,\theta T_{i}y\rangle_{\mathcal{B}}
	\\
	&=\sum_{i\in I}\langle T_{i}\theta x, T_{i}\theta y\rangle_{\mathcal{B}}\\
	&=\langle\sum_{i\in I}T_{i}^{\ast}T_{i}\theta x,\theta y\rangle_{\mathcal{B}}\\
	&=\langle S_{\mathcal{B}}\theta x,\theta y\rangle_{\mathcal{B}}.
	\endaligned
	$$
	Which completes the proof.
\end{proof}
\section{Tensor Product}
 Suppose that $\mathcal{A}, \mathcal{B}$ are unital $C^{\ast}$-algebras and  $\mathcal{A}\otimes\mathcal{B}$ is the completion of $\mathcal{A}\otimes_{alg}\mathcal{B}$ with the spatial norm. $\mathcal{A}\otimes\mathcal{B}$ is the spatial tensor product of $\mathcal{A}$ and $\mathcal{B}$, also suppose that $\mathcal{H}$ is a Hilbert $\mathcal{A}$-module and $\mathcal{K}$ is a Hilbert $\mathcal{B}$-module. We want to define $\mathcal{H}\otimes\mathcal{K}$ as a Hilbert $(\mathcal{A}\otimes\mathcal{B})$-module. Start by forming the algebraic tensor product $\mathcal{H}\otimes_{alg}\mathcal{K}$ of the vector spaces $\mathcal{H}$, $\mathcal{K}$ (over $\mathbb{C}$). This is a left module over $(\mathcal{A}\otimes_{alg}\mathcal{B})$ (the module action being given by $(a\otimes b)(x\otimes y)=ax\otimes by$ $(a\in\mathcal{A},b\in\mathcal{B},x\in\mathcal{H},y\in\mathcal{K})$). For $(x_{1},x_{2}\in\mathcal{H},y_{1},y_{2}\in\mathcal{K})$ we define $ \langle x_{1}\otimes y_{1},x_{2}\otimes y_{2}\rangle_{\mathcal{A}\otimes\mathcal{B}}=\langle x_{1},x_{2}\rangle_{\mathcal{A}}\otimes\langle y_{1},y_{2}\rangle_{\mathcal{B}} $. We also know that for $ z=\sum_{i=1}^{n}x_{i}\otimes y_{i} $ in $\mathcal{H}\otimes_{alg}\mathcal{K}$ we have $ \langle z,z\rangle_{\mathcal{A}\otimes\mathcal{B}}=\sum_{i,j}\langle x_{i},x_{j}\rangle_{\mathcal{A}}\otimes\langle y_{i},y_{j}\rangle_{\mathcal{B}}\geq0 $ and $ \langle z,z\rangle_{\mathcal{A}\otimes\mathcal{B}}=0 $ iff $z=0$.
This extends by linearity to an $(\mathcal{A}\otimes_{alg}\mathcal{B})$-valued sesquilinear form on $\mathcal{H}\otimes_{alg}\mathcal{K}$, which makes $\mathcal{H}\otimes_{alg}\mathcal{K}$ into a semi-inner-product module over the pre-$\mathcal{C}^{\ast}$-algebra $(\mathcal{A}\otimes_{alg}\mathcal{B})$.
The semi-inner-product on $\mathcal{H}\otimes_{alg}\mathcal{K}$ is actually an inner product, see \cite{Lan}. Then $\mathcal{H}\otimes_{alg}\mathcal{K}$ is an inner-product module over the pre-$\mathcal{C}^{\ast}$-algebra $(\mathcal{A}\otimes_{alg}\mathcal{B})$, and we can perform the double completion discussed in chapter 1 of \cite{Lan} to conclude that the completion $\mathcal{H}\otimes\mathcal{K}$ of $\mathcal{H}\otimes_{alg}\mathcal{K}$ is a Hilbert $(\mathcal{A}\otimes\mathcal{B})$-module. We call $ \mathcal{H}\otimes\mathcal{K} $ the exterior tensor product of $\mathcal{H}$ and $\mathcal{K}$. With $\mathcal{H}$ , $\mathcal{K}$ as above, we wish to investigate the adjointable operators on $ \mathcal{H}\otimes\mathcal{K} $. Suppose that $S\in End_{\mathcal{A}}^{\ast}(\mathcal{H})$ and $T\in End_{\mathcal{B}}^{\ast}(\mathcal{K})$. We define a linear operator $S\otimes T$ on $ \mathcal{H}\otimes\mathcal{K} $ by $S\otimes T(x\otimes y)=Sx\otimes Ty  (x\in\mathcal{H} ,y\in\mathcal{K})$. It is a routine verification that  is $S^{\ast}\otimes T^{\ast}$ is the adjoint of $S\otimes T$, so in fact $S\otimes T\in End_{\mathcal{A\otimes B}}^{\ast}(\mathcal{H}\otimes\mathcal{K})$. For more details see \cite{Dav,Lan}. We note that if $a\in\mathcal{A}^{+}$ and $b\in\mathcal{B}^{+}$, then $a\otimes b\in(\mathcal{A}\otimes\mathcal{B})^{+}$. Plainly if $a$, $b$ are Hermitian elements of $\mathcal{A}$ and $a\geq b$, then for every positive element $x$ of $\mathcal{B}$, we have $a\otimes x\geq b\otimes x$.

Let I and J be countable index sets. Our next theorem is a generalization of theorem 2.2 in \cite{Ali}.
\begin{theorem}
	Let $\mathcal{H}$ and $\mathcal{K}$ be two Hilbert $C^{\ast}$-modules over unitary $C^{\ast}$-algebras $\mathcal{A}$ and $\mathcal{B}$, respectively. Let $\{\Lambda_{i}\}_{i\in I}\subset End_{\mathcal{A}}^{\ast}(\mathcal{H})$ and $\{\Gamma_{j}\}_{j\in J}\subset End_{\mathcal{B}}^{\ast}(\mathcal{K})$ be two $\ast$-operator frames for $\mathcal{H}$ and $\mathcal{K}$ with $\ast$-frame operators $S_{\Lambda}$ and $S_{\Gamma}$ and $\ast$-operator frame bounds $(A,B)$ and $(C,D)$ respectively. Then $\{\Lambda_{i}\otimes\Gamma_{j}\}_{i\in I,j\in J}  $ is an $\ast$-operator frame for Hibert $\mathcal{A}\otimes\mathcal{B}$-module $\mathcal{H}\otimes\mathcal{K}$ with $\ast$-frame operator $ S_{\Lambda}\otimes S_{\Gamma}$ and lower and upper $\ast$-operator frame bounds $A\otimes C$ and $ B\otimes D $, respectively.
\end{theorem}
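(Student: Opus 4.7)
The plan is to transfer the frame inequalities for $\{\Lambda_i\}$ and $\{\Gamma_j\}$ to the tensor product system via the multiplicativity of the tensor product on simple tensors, then extend to all of $\mathcal{H}\otimes\mathcal{K}$ by linearity and continuity. First I would test the inequality on an elementary tensor $x\otimes y$. Using the definition $\Lambda_i\otimes\Gamma_j(x\otimes y)=\Lambda_i x\otimes\Gamma_j y$ together with the rule $\langle u_1\otimes v_1,u_2\otimes v_2\rangle_{\mathcal{A}\otimes\mathcal{B}}=\langle u_1,u_2\rangle_{\mathcal{A}}\otimes\langle v_1,v_2\rangle_{\mathcal{B}}$, one gets
\begin{equation*}
\langle(\Lambda_i\otimes\Gamma_j)(x\otimes y),(\Lambda_i\otimes\Gamma_j)(x\otimes y)\rangle_{\mathcal{A}\otimes\mathcal{B}}=\langle\Lambda_i x,\Lambda_i x\rangle_{\mathcal{A}}\otimes\langle\Gamma_j y,\Gamma_j y\rangle_{\mathcal{B}}.
\end{equation*}
Summing over $i\in I$ and $j\in J$ and using bilinearity/continuity of the spatial tensor product (so that the double sum factors as a tensor product of the single sums), the middle term of the would-be frame inequality equals $\bigl(\sum_i\langle\Lambda_i x,\Lambda_i x\rangle_{\mathcal{A}}\bigr)\otimes\bigl(\sum_j\langle\Gamma_j y,\Gamma_j y\rangle_{\mathcal{B}}\bigr)$.

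Next I would invoke the $\ast$-operator frame hypotheses and the order-preserving property recorded at the end of the Preliminaries (namely, $a\ge b$ Hermitian and $x\ge 0$ imply $a\otimes x\ge b\otimes x$). Applying this twice, first in the $\mathcal{A}$-slot and then in the $\mathcal{B}$-slot, sandwiches the product of the two middle sums between $A\langle x,x\rangle_{\mathcal{A}} A^*\otimes C\langle y,y\rangle_{\mathcal{B}} C^*$ and $B\langle x,x\rangle_{\mathcal{A}} B^*\otimes D\langle y,y\rangle_{\mathcal{B}} D^*$. Using that the tensor product of $\ast$-homomorphism-like factorizations is multiplicative, these extremes rewrite as
\begin{equation*}
(A\otimes C)\langle x\otimes y,x\otimes y\rangle_{\mathcal{A}\otimes\mathcal{B}}(A\otimes C)^{\ast}\quad\text{and}\quad(B\otimes D)\langle x\otimes y,x\otimes y\rangle_{\mathcal{A}\otimes\mathcal{B}}(B\otimes D)^{\ast},
\end{equation*}
which is exactly the desired $\ast$-operator frame inequality on elementary tensors.

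To pass from elementary tensors to a general element $z\in\mathcal{H}\otimes\mathcal{K}$, I would write $z$ as a limit of finite sums $\sum_{k=1}^n x_k\otimes y_k$ in $\mathcal{H}\otimes_{alg}\mathcal{K}$. The bilinear expansion of $\langle z,z\rangle$ produces cross terms, so one must verify the inequality after this expansion; the cleanest route is to observe that both the operator $\sum_{i,j}(\Lambda_i\otimes\Gamma_j)^{\ast}(\Lambda_i\otimes\Gamma_j)$ and the multiplication-by-$(A\otimes C)$ (resp.\ $(B\otimes D)$) sandwich in the inequality are adjointable module maps on $\mathcal{H}\otimes\mathcal{K}$, and an operator inequality that holds on a total set of vectors (the simple tensors) extends by linearity and norm-continuity to the whole module. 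This continuity step, together with convergence of the double series $\sum_{i,j}\langle(\Lambda_i\otimes\Gamma_j)z,(\Lambda_i\otimes\Gamma_j)z\rangle$ in $\mathcal{A}\otimes\mathcal{B}$, is the main technical obstacle; I expect it to follow from the uniform upper bound $(B\otimes D)\langle\cdot,\cdot\rangle(B\otimes D)^{\ast}$ applied partial-sum-wise.

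Finally, for the frame-operator identity, it suffices to check on elementary tensors:
\begin{align*}
(S_{\Lambda}\otimes S_{\Gamma})(x\otimes y)&=S_{\Lambda}x\otimes S_{\Gamma}y=\Bigl(\sum_{i\in I}\Lambda_i^{\ast}\Lambda_i x\Bigr)\otimes\Bigl(\sum_{j\in J}\Gamma_j^{\ast}\Gamma_j y\Bigr)\\
&=\sum_{i\in I,j\in J}(\Lambda_i^{\ast}\Lambda_i x)\otimes(\Gamma_j^{\ast}\Gamma_j y)=\sum_{i\in I,j\in J}(\Lambda_i\otimes\Gamma_j)^{\ast}(\Lambda_i\otimes\Gamma_j)(x\otimes y),
\end{align*}
where in the last step I use $(\Lambda_i\otimes\Gamma_j)^{\ast}=\Lambda_i^{\ast}\otimes\Gamma_j^{\ast}$, which was noted in the preceding discussion of tensor products. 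Extending by linearity and continuity to $\mathcal{H}\otimes\mathcal{K}$ gives $S_{\Lambda}\otimes S_{\Gamma}$ as the $\ast$-frame operator of $\{\Lambda_i\otimes\Gamma_j\}_{i\in I,j\in J}$, completing the proof.
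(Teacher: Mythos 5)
Your proposal is correct in substance and follows essentially the same route as the paper: both verify the $\ast$-operator frame inequality on elementary tensors via $\langle x_1\otimes y_1,x_2\otimes y_2\rangle_{\mathcal{A}\otimes\mathcal{B}}=\langle x_1,x_2\rangle_{\mathcal{A}}\otimes\langle y_1,y_2\rangle_{\mathcal{B}}$ and the compatibility of $\otimes$ with the order, then assert the extension to all of $\mathcal{H}\otimes\mathcal{K}$, and both identify the frame operator by the same computation $(S_{\Lambda}\otimes S_{\Gamma})(x\otimes y)=\sum_{i,j}(\Lambda_i\otimes\Gamma_j)^{\ast}(\Lambda_i\otimes\Gamma_j)(x\otimes y)$. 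One caution: your stated justification for the extension step --- that an inequality of the form $a\langle z,z\rangle a^{\ast}\leq\langle Sz,z\rangle$ holding on a total set of vectors extends ``by linearity and norm-continuity'' --- is not literally valid, since $z\mapsto\langle Sz,z\rangle$ is quadratic rather than linear and cross terms do not take care of themselves; you rightly flag this as the main technical obstacle, whereas the paper simply asserts the passage from finite sums of simple tensors to general $z$ without any argument, so on this point your write-up is no weaker than the published proof.
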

\begin{proof}
By the definition of $\ast$-operator frames $\{\Lambda_{i}\}_{i\in I} $ and $\{\Gamma_{j}\}_{j\in J}$ we have 
$$A\langle x,x\rangle_{\mathcal{A}} A^{\ast}\leq\sum_{i\in I}\langle \Lambda_{i}x,\Lambda_{i}x\rangle_{\mathcal{A}}\leq B\langle x,x\rangle_{\mathcal{A}} B^{\ast} , \forall x\in\mathcal{H},$$
and
$$C\langle y,y\rangle_{\mathcal{B}} C^{\ast}\leq\sum_{j\in J}\langle \Gamma_{j}y,\Gamma_{j}y\rangle_{\mathcal{B}}\leq D\langle y,y\rangle_{\mathcal{B}} D^{\ast} , \forall y\in\mathcal{K}.$$
Therefore
$$
\aligned
&(A\langle x,x\rangle_{\mathcal{A}} A^{\ast})\otimes (C\langle y,y\rangle_{\mathcal{B}} C^{\ast})\\&\leq\sum_{i\in I}\langle \Lambda_{i}x,\Lambda_{i}x\rangle_{\mathcal{A}}\otimes\sum_{j\in J}\langle \Gamma_{j}y,\Gamma_{j}y\rangle_{\mathcal{B}}\\
&\leq (B\langle x,x\rangle_{\mathcal{A}} B^{\ast})\otimes (D\langle y,y\rangle_{\mathcal{B}} D^{\ast}) , \forall x\in\mathcal{H} ,\forall y\in\mathcal{K}.
\endaligned
$$
Then
$$
\aligned
&(A\otimes C)(\langle x,x\rangle_{\mathcal{A}}\otimes\langle y,y\rangle_{\mathcal{B}}) (A^{\ast}\otimes C^{\ast})\\&\leq\sum_{i\in I,j\in J}\langle \Lambda_{i}x,\Lambda_{i}x\rangle_{\mathcal{A}}\otimes\langle \Gamma_{j}y,\Gamma_{j}y\rangle_{\mathcal{B}}\\
&\leq (B\otimes D)(\langle x,x\rangle_{\mathcal{A}}\otimes\langle y,y\rangle_{\mathcal{B}}) (B^{\ast}\otimes D^{\ast}) , \forall x\in\mathcal{H} ,\forall y\in\mathcal{K}.
\endaligned
$$
Consequently we have
$$
\aligned
&(A\otimes C)\langle x\otimes y,x\otimes y\rangle_{\mathcal{A\otimes B}} (A\otimes C)^{\ast}\\&\leq\sum_{i\in I,j\in J}\langle\Lambda_{i}x\otimes\Gamma_{j}y,\Lambda_{i}x\otimes\Gamma_{j}y\rangle_{\mathcal{A\otimes B}} \\
&\leq (B\otimes D)\langle x\otimes y,x\otimes y\rangle_{\mathcal{A\otimes B}} (B\otimes D)^{\ast}, \forall x\in\mathcal{H}, \forall y\in\mathcal{K}.
\endaligned
$$
Then for all $x\otimes y\in\mathcal{H\otimes K}$ we have
$$
\aligned
&(A\otimes C)\langle x\otimes y,x\otimes y\rangle_{\mathcal{A\otimes B}} (A\otimes C)^{\ast}\\&\leq\sum_{i\in I,j\in J}\langle(\Lambda_{i}\otimes\Gamma_{j})(x\otimes y),(\Lambda_{i}\otimes\Gamma_{j})(x\otimes y)\rangle_{\mathcal{A\otimes B}} \\
&\leq (B\otimes D)\langle x\otimes y,x\otimes y\rangle_{\mathcal{A\otimes B}} (B\otimes D)^{\ast}.
\endaligned
$$
The last inequality is satisfied for every finite sum of elements in $\mathcal{H}\otimes_{alg}\mathcal{K}$ and then it's satisfied for all $z\in\mathcal{H\otimes K}$. It shows that $\{\Lambda_{i}\otimes\Gamma_{j}\}_{i\in I,j\in J}  $ is $\ast$-operator frame for Hibert $\mathcal{A}\otimes\mathcal{B}$-module $\mathcal{H}\otimes\mathcal{K}$ with lower and upper $\ast$-operator frame bounds $A\otimes C$ and $ B\otimes D $, respectively.\\
By the definition of $\ast$-frame operator $S_{\Lambda}$ and $S_{\Gamma}$ we have:$$S_{\Lambda}x=\sum_{i\in I}\Lambda_{i}^{\ast}\Lambda_{i}x, \forall x\in\mathcal{H},$$
and
$$S_{\Gamma}y=\sum_{j\in J}\Gamma_{j}^{\ast}\Gamma_{j}y, \forall y\in\mathcal{K}.$$
Therefore
$$
\aligned
(S_{\Lambda}\otimes S_{\Gamma})(x\otimes y)&=S_{\Lambda}x\otimes S_{\Gamma}y\\
&=\sum_{i\in I}\Lambda_{i}^{\ast}\Lambda_{i}x\otimes\sum_{j\in J}\Gamma_{j}^{\ast}\Gamma_{j}y\\
&=\sum_{i\in I,j\in J}\Lambda_{i}^{\ast}\Lambda_{i}x\otimes\Gamma_{j}^{\ast}\Gamma_{j}y\\
&=\sum_{i\in I,j\in J}(\Lambda_{i}^{\ast}\otimes\Gamma_{j}^{\ast})(\Lambda_{i}x\otimes\Gamma_{j}y)\\
&=\sum_{i\in I,j\in J}(\Lambda_{i}^{\ast}\otimes\Gamma_{j}^{\ast})(\Lambda_{i}\otimes\Gamma_{j})(x\otimes y)\\
&=\sum_{i\in I,j\in J}(\Lambda_{i}\otimes\Gamma_{j})^{\ast})(\Lambda_{i}\otimes\Gamma_{j})(x\otimes y).
\endaligned
$$
Now by the uniqueness of $\ast$-frame operator, the last expression is equal to $S_{\Lambda\otimes\Gamma}(x\otimes y)$. Consequently we have $ (S_{\Lambda}\otimes S_{\Gamma})(x\otimes y)=S_{\Lambda\otimes\Gamma}(x\otimes y)$. The last equality is satisfied for every finite sum of elements in $\mathcal{H}\otimes_{alg}\mathcal{K}$ and then it's satisfied for all $z\in\mathcal{H\otimes K}$. It shows that $ (S_{\Lambda}\otimes S_{\Gamma})(z)=S_{\Lambda\otimes\Gamma}(z)$. So $S_{\Lambda\otimes\Gamma}=S_{\Lambda}\otimes S_{\Gamma}$.
\end{proof}
The two following theorems are a generalization of Theorem 3.5 in \cite{BA}.
\begin{theorem} \label{Th4.2}
	If $Q\in End_{\mathcal{A}}^{\ast}(\mathcal{H})$ is invertible and $\{\Lambda_{i}\}_{i\in I}\subset End_{\mathcal{A\otimes B}}^{\ast}(\mathcal{H\otimes K})$ is an $\ast$-operator frame for $\mathcal{H\otimes K}$ with lower and upper  $\ast$-operator frame bounds $A$ and $B$ respectively and $\ast$-frame operator $S$, then $\{\Lambda_{i}(Q^{\ast}\otimes I)\}_{i\in I} $ is an $\ast$-operator frame for $\mathcal{H\otimes K}$ with lower and upper $\ast$-operator frame bounds $\|Q^{\ast-1}\|^{-1}A$ and $\|Q\|B$ respectively and $\ast$-frame operator $(Q\otimes I)S(Q^{\ast}\otimes I)$.
\end{theorem}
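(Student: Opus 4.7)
The plan is to view this as a direct application of Theorem \ref{Th3.9} in the Hilbert $(\mathcal{A}\otimes\mathcal{B})$-module $\mathcal{H}\otimes\mathcal{K}$, taking $\theta = Q^{\ast}\otimes I$. First I would verify the hypotheses of Theorem \ref{Th3.9}: since $Q\in End_{\mathcal{A}}^{\ast}(\mathcal{H})$ is invertible, $Q^{\ast}$ is invertible as well, and from the preliminary discussion on tensor products of adjointable operators, $Q^{\ast}\otimes I\in End_{\mathcal{A}\otimes\mathcal{B}}^{\ast}(\mathcal{H}\otimes\mathcal{K})$ has adjoint $Q\otimes I$ and inverse $(Q^{\ast})^{-1}\otimes I$. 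In particular, $\theta=Q^{\ast}\otimes I$ is injective with closed range, so Theorem \ref{Th3.9} applies and yields that $\{\Lambda_{i}(Q^{\ast}\otimes I)\}_{i\in I}$ is an $\ast$-operator frame with $\ast$-frame operator $\theta^{\ast}S\theta=(Q\otimes I)S(Q^{\ast}\otimes I)$, which is precisely the claimed frame operator.

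Next I would translate the bounds supplied by Theorem \ref{Th3.9}, namely $\|(\theta^{\ast}\theta)^{-1}\|^{-\frac{1}{2}}A$ and $\|\theta\|B$, into the form stated in the theorem. For the upper bound this is immediate: $\|\theta\|=\|Q^{\ast}\otimes I\|=\|Q^{\ast}\|=\|Q\|$, giving $\|Q\|B$. For the lower bound, the key observation is that
\begin{equation*}
\theta^{\ast}\theta = (Q\otimes I)(Q^{\ast}\otimes I)=QQ^{\ast}\otimes I,
\end{equation*}
so $(\theta^{\ast}\theta)^{-1}=(QQ^{\ast})^{-1}\otimes I$ and the spatial tensor norm gives $\|(\theta^{\ast}\theta)^{-1}\|=\|(QQ^{\ast})^{-1}\|$. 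Using the $C^{\ast}$-identity $\|(QQ^{\ast})^{-1}\|=\|(Q^{\ast})^{-1}(Q^{\ast})^{-\ast}\|=\|(Q^{\ast})^{-1}\|^{2}$, one obtains $\|(\theta^{\ast}\theta)^{-1}\|^{-\frac{1}{2}}=\|(Q^{\ast})^{-1}\|^{-1}=\|Q^{\ast-1}\|^{-1}$, so the lower bound is $\|Q^{\ast-1}\|^{-1}A$, as required.

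The only real obstacle is bookkeeping: ensuring that the norm identity $\|(QQ^{\ast})^{-1}\|=\|Q^{\ast-1}\|^{2}$ is justified (it is just $\|T^{\ast}T\|=\|T\|^{2}$ applied to $T=(Q^{\ast})^{-1}$) and that the spatial tensor product norm behaves multiplicatively on elementary tensors of adjointable operators, i.e.\ $\|R\otimes I\|=\|R\|$, which was invoked above. Both facts are standard for Hilbert $C^{\ast}$-modules and are implicit in the tensor-product preliminaries preceding the theorem, so no further work beyond citing these and invoking Theorem \ref{Th3.9} is needed.
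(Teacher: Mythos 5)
Your proposal is correct, and it takes a structurally different route from the paper. The paper proves the theorem from scratch: it verifies that $Q\otimes I$ is adjointable with adjoint $Q^{\ast}\otimes I$, establishes the two-sided estimate $\|Q^{\ast-1}\|^{-1}\,|z|\leq|(Q^{\ast}\otimes I)z|\leq\|Q\|\,|z|$ for all $z\in\mathcal{H}\otimes\mathcal{K}$, substitutes $(Q^{\ast}\otimes I)z$ into the frame inequality for $\{\Lambda_{i}\}$, and then computes the frame operator $(Q\otimes I)S(Q^{\ast}\otimes I)$ term by term — in effect re-running the argument of Theorem \ref{Th3.9} in the tensor-product module. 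You instead observe that $\mathcal{H}\otimes\mathcal{K}$ is itself a Hilbert $(\mathcal{A}\otimes\mathcal{B})$-module, so Theorem \ref{Th3.9} applies verbatim with $\theta=Q^{\ast}\otimes I$ (invertible, hence injective with closed range), and the only remaining work is to convert the abstract bounds $\|(\theta^{\ast}\theta)^{-1}\|^{-\frac{1}{2}}A$ and $\|\theta\|B$ into $\|Q^{\ast-1}\|^{-1}A$ and $\|Q\|B$; your computation $\theta^{\ast}\theta=QQ^{\ast}\otimes I$ together with the $C^{\ast}$-identity $\|(QQ^{\ast})^{-1}\|=\|(Q^{\ast})^{-1}\|^{2}$ does exactly that, and the frame operator $\theta^{\ast}S\theta=(Q\otimes I)S(Q^{\ast}\otimes I)$ falls out for free. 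The underlying estimates are the same in both arguments, but your reduction is more economical and makes explicit that Theorem \ref{Th4.2} is a corollary of Theorem \ref{Th3.9}; the price is that you must invoke (as you note) the standard facts $\|R\otimes I\|=\|R\|$ for the exterior tensor product and the behaviour of adjoints and inverses under $\otimes$, which the paper also uses but verifies partially by hand on elementary tensors. Both proofs are valid.
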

\begin{proof}
	Since $Q\in End_{\mathcal{A}}^{\ast}(\mathcal{H})$, $Q\otimes I\in End_{\mathcal{A\otimes B}}^{\ast}(\mathcal{H\otimes K})$ with inverse $Q^{-1}\otimes I$. It is obvious that the adjoint of $Q\otimes I$ is $Q^{\ast}\otimes I$. An easy calculation shows that for every elementary tensor $x\otimes y$,
	$$
	\aligned
	\|(Q\otimes I)(x\otimes y)\|^{2}&=\|Q(x)\otimes y\|^{2}\\
	&=\|Q(x)\|^{2}\|y\|^{2}\\
	&\leq\|Q\|^{2}\|x\|^{2}\|y\|^{2}\\
	&=\|Q\|^{2}\|x\otimes y\|^{2}.
	\endaligned
	$$
	So $Q\otimes I$ is bounded, and therefore it can be extended to $\mathcal{H\otimes K}$. Similarly for $Q^{\ast}\otimes I$, hence $Q\otimes I$ is $\mathcal{A\otimes B}$-linear, adjointable with adjoint $Q^{\ast}\otimes I$. Hence for every $z\in\mathcal{H\otimes K}$ we have  $$\|Q^{\ast-1}\|^{-1}.|z|\leq|(Q^{\ast}\otimes I)z|\leq\|Q\|.|z|.$$
	By the definition of $\ast$-operator frames we have 
	$$A\langle z,z\rangle_{\mathcal{A\otimes B}} A^{\ast}\leq\sum_{i\in I}\langle \Lambda_{i}z,\Lambda_{i}z\rangle_{\mathcal{A\otimes B}}\leq B\langle z,z\rangle_{\mathcal{A\otimes B}} B^{\ast}.$$
	Then 
	$$
	\aligned
	&A\langle (Q^{\ast}\otimes I)z,(Q^{\ast}\otimes I)z\rangle_{\mathcal{A\otimes B}} A^{\ast}\\&\leq\sum_{i\in I}\langle \Lambda_{i}(Q^{\ast}\otimes I)z,\Lambda_{i}(Q^{\ast}\otimes I)z\rangle_{\mathcal{A\otimes B}}\\
	&\leq B\langle (Q^{\ast}\otimes I)z,(Q^{\ast}\otimes I)z\rangle_{\mathcal{A\otimes B}} B^{\ast}.
	\endaligned
	$$
	So$$\aligned&\|Q^{\ast-1}\|^{-1}A\langle z,z\rangle_{\mathcal{A\otimes B}}(\|Q^{\ast-1}\|^{-1}A)^{\ast}\\&\leq\sum_{i\in I}\langle \Lambda_{i}(Q^{\ast}\otimes I)z,\Lambda_{i}(Q^{\ast}\otimes I)z\rangle_{\mathcal{A\otimes B}}\\&\leq\|Q\|B\langle z,z\rangle_{\mathcal{A\otimes B}}(\|Q\|B)^{\ast}.\endaligned$$
	Now
	$$
	\aligned
	(Q\otimes I)S(Q^{\ast}\otimes I)&=(Q\otimes I)(\sum_{i\in I}\Lambda_{i}^{\ast}\Lambda_{i})(Q^{\ast}\otimes I)\\
	&=\sum_{i\in I}(Q\otimes I)\Lambda_{i}^{\ast}\Lambda_{i}(Q^{\ast}\otimes I)\\
	&=\sum_{i\in I}(\Lambda_{i}(Q^{\ast}\otimes I))^{\ast}\Lambda_{i}(Q^{\ast}\otimes I).
	\endaligned
	$$
	Which completes the proof.
\end{proof}
\begin{corollary}
	If $Q\in End_{\mathcal{B}}^{\ast}(\mathcal{K})$ is invertible and $\{\Lambda_{i}\}_{i\in I}\subset End_{\mathcal{A\otimes B}}^{\ast}(\mathcal{H\otimes K})$ is an $\ast$-operator frame for $\mathcal{H\otimes K}$ with lower and upper  $\ast$-operator frame bounds $A$ and $B$ respectively and $\ast$-frame operator $S$, then $\{\Lambda_{i}(I\otimes Q^{\ast})\}_{i\in I} $ is an $\ast$-operator frame for $\mathcal{H\otimes K}$ with lower and upper $\ast$-operator frame bounds $\|Q^{\ast-1}\|^{-1}A$ and $\|Q\|B$ respectively and $\ast$-frame operator $(I\otimes Q)S(I\otimes Q^{\ast})$.
\end{corollary}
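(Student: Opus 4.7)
The proof is completely parallel to that of Theorem \ref{Th4.2}, with the operator $Q\otimes I$ replaced by $I\otimes Q$ throughout, so my plan is to follow that argument step by step and verify that the relevant bounds and identities still go through.

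First, I will note that since $Q\in End_{\mathcal{B}}^{\ast}(\mathcal{K})$ is invertible, the operator $I\otimes Q$ belongs to $End_{\mathcal{A\otimes B}}^{\ast}(\mathcal{H\otimes K})$, has adjoint $I\otimes Q^{\ast}$, and has inverse $I\otimes Q^{-1}$; this extends from elementary tensors to all of $\mathcal{H}\otimes\mathcal{K}$ by the same boundedness estimate as in Theorem \ref{Th4.2}, namely $\|(I\otimes Q)(x\otimes y)\|^{2}=\|x\|^{2}\|Q(y)\|^{2}\leq\|Q\|^{2}\|x\otimes y\|^{2}$. From this I obtain for every $z\in\mathcal{H\otimes K}$ the two-sided estimate $\|Q^{\ast-1}\|^{-1}|z|\leq|(I\otimes Q^{\ast})z|\leq\|Q\||z|$.

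Next, I will insert $(I\otimes Q^{\ast})z$ into the $\ast$-operator frame inequality of $\{\Lambda_{i}\}_{i\in I}$:
\begin{equation*}
A\langle (I\otimes Q^{\ast})z,(I\otimes Q^{\ast})z\rangle_{\mathcal{A\otimes B}}A^{\ast}\leq\sum_{i\in I}\langle\Lambda_{i}(I\otimes Q^{\ast})z,\Lambda_{i}(I\otimes Q^{\ast})z\rangle_{\mathcal{A\otimes B}}\leq B\langle(I\otimes Q^{\ast})z,(I\otimes Q^{\ast})z\rangle_{\mathcal{A\otimes B}}B^{\ast}.
\end{equation*}
Combining with the two-sided estimate from the previous step (and using that conjugation by $A$, $B$ preserves the ordering), I will obtain
\begin{equation*}
\|Q^{\ast-1}\|^{-1}A\langle z,z\rangle_{\mathcal{A\otimes B}}(\|Q^{\ast-1}\|^{-1}A)^{\ast}\leq\sum_{i\in I}\langle\Lambda_{i}(I\otimes Q^{\ast})z,\Lambda_{i}(I\otimes Q^{\ast})z\rangle_{\mathcal{A\otimes B}}\leq\|Q\|B\langle z,z\rangle_{\mathcal{A\otimes B}}(\|Q\|B)^{\ast},
\end{equation*}
which identifies $\{\Lambda_{i}(I\otimes Q^{\ast})\}_{i\in I}$ as an $\ast$-operator frame with the stated bounds.

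Finally, the frame operator identity $(I\otimes Q)S(I\otimes Q^{\ast})=\sum_{i\in I}(\Lambda_{i}(I\otimes Q^{\ast}))^{\ast}\Lambda_{i}(I\otimes Q^{\ast})$ will follow by factoring $(I\otimes Q)$ through the sum defining $S$ and using $(I\otimes Q)\Lambda_{i}^{\ast}=(\Lambda_{i}(I\otimes Q^{\ast}))^{\ast}$, exactly as in Theorem \ref{Th4.2}. The only potential subtlety is the passage from elementary tensors to the whole of $\mathcal{H\otimes K}$ in verifying boundedness of $I\otimes Q$, but this is handled by the standard density/continuity argument already used in Theorem \ref{Th4.2}; nothing genuinely new occurs here.
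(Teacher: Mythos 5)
Your proposal is correct and matches the paper exactly: the paper's own proof of this corollary is the one-line remark that it is ``similar to the proof of Theorem \ref{Th4.2},'' and your argument is precisely that adaptation, replacing $Q\otimes I$ by $I\otimes Q$ throughout. Nothing further is needed.
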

\begin{proof}
Similar to the proof of the theorem \ref{Th4.2}.
\end{proof}
\bibliographystyle{amsplain}

\end{document}